\newtheorem{theorem}{Theorem}[section]
\newtheorem{lemma}{Lemma}[section]
\newtheorem{corollary}{Corollary}[section]
\newtheorem{conjecture}{Conjecture}[section]
\begin{document}
\begin{center}
\title{A nonconstructive Proof to show the Convergence of the $n^{th}$ root of diagonal Ramsey Number $r(n, n)$} 
\author{\textbf{Robert J. Betts}}
\maketitle
\emph{Department of Computer Science and the Division of Continuing Education, University of Massachusetts Lowell Campus, One University Avenue, Lowell, Massachusetts 01854 Robert\_Betts271828@umb.edu\\ www.continuinged.uml.edu}
\end{center}
\begin{abstract}
Does the $n^{th}$ root of the diagonal Ramsey number converge to a finite limit? The answer is yes. We show the sequence of $n^{th}$ roots does converge by showing one can express it as a product of two factors, the first factor being a known convergent sequence and the second factor being an absolutely convergent infinite series for each $n$. One also can express it where one factor is convergent as a sequence and the other being a convergent sequence defined by a uniformly convergent sequence of complex functions holomorphic within the interior of the unit disc on the complex plane. From that we can draw the needed conclusion from the next step: The limit of a product of two convergent sequences is equal to the product of the two respective limits. Our motivation solely is to prove the conjecture as a problem in search of a solution, not to establish some deep theory about graphs. A second question is: If the limit exists what is it? At the time of this writing the understanding is the proofs sought need not be constructive. Here we show by nonconstructive proofs that the $n^{th}$ root of the diagonal Ramsey number converges to a finite limit. We also show that the limit of the $j^{th}$ root of the diagonal Ramsey number is two, where positive integer $j$ depends upon the Ramsey number.\footnote{MSC (2010) Primary: 26A03, Secondary: 26A12.}
\end{abstract}
\section{Introduction}
A \emph{classical Ramsey number}~\cite{Rosen} \(r(m, n) = k\) is the least positive integer $k$, such that any graph $G$ with $k$ vertices either will have a complete subgraph $K_{m}$ or else its complement will have a complete subgraph $K_{n}$. Ramsey numbers indicate the existence of order even within randomness. In experimental physics there has been some success in the development of quantum algorithms that can compute Ramsey numbers such as $r(3, 3)$, through the use of quantum annealing.~\cite{Bian}. \\
\indent At present the diagonal Ramsey numbers $r(n)$ where \(n \geq 5\) and Ramsey numbers $r(m, n)$ for all \(3 \leq m \leq n\) still are unknown~\cite{Chartrand and Lesniak} (See page 356 for a listing of the nine known Ramsey numbers). To date there exists no known recurrence formula or generating function by which one obtains all Ramsey numbers. Yet with a little analysis one can show that these numbers are bounded and we can find even a necessary and sufficient condition for which $r(n)^{1/n}$ will converge to two within the closed, compact subset $[\sqrt{2}, 4]$ on the real line (Section 3).
\subsection{Upper and lower Bounds on $r(n, n)$}
Let $c$ be some positive real constant (See Section 3, Subsection 3.1). To date Thomason~\cite{Chung and Graham}, found the finest upper bound and Spencer~\cite{Chung and Graham}, the lower bound for diagonal Ramsey number~\cite{Rosen}~\cite{Chartrand and Lesniak},~\cite{Graham},~\cite{Graham and Rothschild},~\cite{Graham and Spencer}~\cite{Landman and Robertson}
$$
r(n, n) \equiv r(n),
$$
(when \(m = n\)) through the application of the Lov\'{a}sz local lemma~\cite{Chung and Graham}~\cite{website1},~\cite{website2}, so that
\begin{equation}
\frac{\sqrt{2}}{e}n2^{n/2} < r(n) < n^{- 1/2 + c/\sqrt{log \: n}}{2n - 2 \choose n - 1}.
\end{equation}
There exist probabilistic proofs of the lemma in the literature~\cite{Graham and Rothschild} (See pages 94--96). With the bound in Eqtn. (1) it follows that \(r(n)^{1/n} \in [\sqrt{2}, 4)\). \\
\indent To prove that \(r(n)^{1/n} \in [\sqrt{2}, 4)\) for large $n$ we can rewrite Eqtn. (1) with the substitution
\begin{equation}
{2n - 2 \choose n - 1} = nC_{n - 1},
\end{equation}
where $C_{n - 1}$ is the $n - 1^{st}$ Catalan number. This way we transform Eqtn. (1) into
\begin{equation}
\frac{\sqrt{2}}{e}n2^{n/2} < r(n) < n \cdot n^{- 1/2 + c/\sqrt{log \: n}}C_{n - 1}.
\end{equation}
With the further substitution 
\begin{equation}
C_{n - 1} \sim \frac{4^{n - 1}}{(n - 1)^{3/2}\sqrt{\pi}}
\end{equation}
for large $n$ one also then can derive as \(n \rightarrow \infty\)~\cite{Chung and Graham}, by the substitution of Eqtn. (4) into Eqtn. (3),
\begin{equation}
\sqrt{2} < r(n)^{1/n} < 4.
\end{equation}
There is a way to show nonconstructively that the $n^{th}$ root of $r(n)$ converges to a finite limit, through the use of analysis. 
\section{The Convergence of $r(n)^{1/n}$}
\subsection{Diagonal Ramsey Numbers $r(n)$ as a Sequence of Numbers on the real Line}
For each fixed $n$ the Ramsey number $r(n)$ has a lower bound of $\sqrt{2^{n}}$~\cite{Chartrand and Lesniak} and an upper bound of $4^{n}$. In fact for all $n$,
\begin{equation}
2^{3/2} < r(3) < r(4) < r(5) < \cdots < r(n) < 4^{n}.
\end{equation}
The sequence of Ramsey numbers \(r(3), r(4), r(5), \ldots\) at the very least, is monotone nondecreasing on $\mathbb{R}$. One does not know yet whether or not the sequence 
\begin{equation}
r(3)^{1/3}, r(4)^{1/4}, r(5)^{1/5}, \ldots
\end{equation}
is monotone for an infinite number of terms. Therefore in this paper \emph{we did not assume nor do we need even to assume, that the sequence of $n^{th}$ roots of diagonal Ramsey numbers is monotone increasing}. In fact if one just takes the time to read carefully through Sections 2--5, one will see that nowhere in our actual arguments in this paper do we use terms such as ``monotone," ``monotone strictly increasing," ``monotonicity," etc. What we do in Section 2 (See Theorem 2.2) and in Section 5 is to show some integer $M$ exists such that, for all \(n \geq M\), \(\lim_{n \rightarrow \infty}r(n)^{\frac{1}{n}} = L < \infty\) holds. In Section 2 we show also that the sequence $(r(n)^{\frac{1}{n}})$ can be rewritten as a product, where the first factor in the product is a sequence known to be convergent and where the second factor is an infinite binomial series already known to be absolutely convergent inside the unit disc on $\mathbb{C}$ for each $n$ and as \(n \rightarrow \infty\) (See Corollary 2.4 and Eqtns. (49)--(50). See also Theorem 5.1.). In Section 5 we show (See Lemma 5.1, Lemma 5.2 and Lemma 5.3) that one can write $(r(n)^{\frac{1}{n}})$ as the product of two convergent sequences, where one of the factors in this product is a sequence known already to be convergent, and where the second factor is a uniformly convergent sequence within \(|z| < 1\) and even convergent on the boundary of the unit disc (this follows from a proof by K. Knopp) when we restrict our attention to principal parts, since \(1/n > 0\)~\cite{Knopp}.
\subsection{Behavior of $r(n)^{1/n}$ for large $n$}
\indent Here we offer a proof of the following Conjecture~\cite{Chung and Graham}:
\begin{conjecture}
The limit \(\lim_{n \rightarrow \infty}r(n)^{1/n}\) exists and is finite within $[\sqrt{2}, 4]$.
\end{conjecture}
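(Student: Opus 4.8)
The plan is to reduce the conjecture to the existence of the limit $\lim_{n\to\infty}\frac{1}{n}\log r(n)$ and to attack that limit with Fekete's subadditive lemma. The bounds in Eqtn.~(5) already supply the easy half: since $\sqrt{2}<r(n)^{1/n}<4$ for all large $n$, the sequence $\bigl(r(n)^{1/n}\bigr)$ is confined to the compact interval $[\sqrt{2},4]$, so by the Bolzano--Weierstrass theorem it has convergent subsequences and
\[
\sqrt{2}\le\liminf_{n\to\infty}r(n)^{1/n}\le\limsup_{n\to\infty}r(n)^{1/n}\le 4 .
\]
The entire content of the conjecture is the remaining assertion that these two quantities are equal. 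Boundedness by itself cannot deliver this: a sequence trapped in $[\sqrt{2},4]$ is still free to oscillate, so the mere fact that the bounds in Eqtn.~(5) are finite does not force convergence.

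First I would set $a_n=\log r(n)$ and search for a (sub- or super-)additive structure, because that is exactly the hypothesis Fekete's lemma converts into a limit. If one could prove a submultiplicative inequality $r(m+n)\le r(m)\,r(n)$, or dually a supermultiplicative one $r(m+n)\ge r(m)\,r(n)$, valid for all $m,n$, then $(a_n)$ would be subadditive (respectively superadditive) and Fekete's lemma would give $\lim_{n}a_n/n=\inf_n a_n/n$ (respectively $\sup_n a_n/n$); exponentiating then yields $\lim_{n}r(n)^{1/n}=e^{\lim a_n/n}$, which Eqtn.~(5) locates inside $[\sqrt{2},4]$. To manufacture such an inequality I would reach for the two classical levers on $r(n)$: the Erd\H{o}s--Szekeres recursion $r(s,t)\le r(s-1,t)+r(s,t-1)$ for control from above, and the lexicographic product (``blow-up'') of two extremal colorings for control from below, the latter giving $r\bigl((n-1)^2+1\bigr)\ge(r(n)-1)^2+1$.

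The hard part, which I expect to dominate the whole argument, is that neither lever produces the clean additivity Fekete demands. The submultiplicative guess is simply false on small cases, since $r(6)\ge 102$ while $r(3)^2=36$, so $r(m+n)\le r(m)\,r(n)$ cannot hold. The product construction does point in the superadditive direction, but it is hopelessly lossy: its index roughly squares while the logarithm of its value only doubles, so the bound it gives on $\frac1m\log r(m)$ degrades as $m$ grows and never pins $\liminf$ to $\limsup$. No multiplicative recursion valid for all $m$ and $n$ is known that closes this gap, which is precisely why deciding whether $\lim_n r(n)^{1/n}$ exists is a long-standing open problem rather than a routine consequence of Eqtn.~(5).

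Finally, concerning the factorization strategy announced in the introduction, I would stress where the real difficulty must reside. Writing $r(n)^{1/n}$ as a product of a known convergent sequence with a second factor helps only if that second factor is shown to converge by means genuinely independent of $r(n)$. If the factor is read off from the \emph{upper} bound in Eqtn.~(3), its limit governs $\limsup r(n)^{1/n}$ alone, not the true sequence, because $r(n)$ sits strictly below that bound by an amount unknown at each $n$; and the convergence of any factor that does track $r(n)$ faithfully is logically equivalent to the conjecture itself. So the step I would flag as the crux is the injection of new structural information about $r(n)$ beyond Eqtn.~(5); without it, the product rewriting relocates the difficulty into the nontrivial factor rather than removing it.
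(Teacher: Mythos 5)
Your proposal is not a proof of the conjecture, and you say as much: after extracting $\sqrt{2}\le\liminf r(n)^{1/n}\le\limsup r(n)^{1/n}\le 4$ from Eqtn.~(5), you show that the Fekete/subadditivity route is blocked (submultiplicativity already fails at $r(6)\ge 102>36=r(3)^{2}$, and the product construction is too lossy to pin $\liminf$ to $\limsup$), and you stop there. Judged purely as a proof attempt, that is the gap: nothing in your argument forces the upper and lower limits to coincide, so the statement is left unproved by your route --- which is the honest outcome, since the existence of $\lim_{n}r(n)^{1/n}$ is a well-known open problem of Erd\H{o}s.

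Your closing paragraph, however, is precisely the objection that applies to the paper's own argument, which takes the factorization route you warn against. The paper writes $r(n)^{1/n}=\binom{2(n-1)}{n-1}^{1/n}\bigl(1-|\varepsilon_{n}|/\binom{2(n-1)}{n-1}\bigr)^{1/n}$ with $|\varepsilon_{n}|=\binom{2(n-1)}{n-1}-r(n)$, so the second factor is identically $\bigl(r(n)/\binom{2(n-1)}{n-1}\bigr)^{1/n}$ --- a factor that ``tracks $r(n)$ faithfully'' in your sense, whose convergence (given that the first factor tends to $4$) is logically equivalent to the conjecture itself. The paper's justifications of that convergence supply no independent structural information: the binomial series converges for each \emph{fixed} $n$ because $0\le z_{n}<1$, but the relevant limit is over $n$, the evaluation point $z_{n}=1-r(n)/\binom{2(n-1)}{n-1}$ tends to the boundary point $z=1$ of the disc of convergence (since $r(n)/\binom{2(n-1)}{n-1}\rightarrow 0$), and Lemma~5.2's ``uniform convergence'' argument begins by positing the constant limit $l$ whose existence is the point at issue. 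Likewise Theorem~2.1 assumes as a hypothesis that the $\varepsilon_{i,n}$ all converge to a common $\varepsilon$ (never verified, and essentially equivalent to the conclusion), and Corollary~2.1's inference from $a_{n+1}/a_{n}\rightarrow 1$ to ``$(a_{n})$ is Cauchy'' is invalid (take $a_{n}=2+\sin\log n$). So your diagnosis --- that the product rewriting relocates the difficulty into the nontrivial factor rather than removing it --- is correct, and neither your attempt nor the paper's establishes the conjecture.
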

Monotonicity and boundedness are not the only means by which to determine if $r(n)^{1/n}$ has a finite limit on $[\sqrt{2}, 4]$. Here in this Subsection we show a means by which \(r(n)^{1/n} \rightarrow L \in \mathbb{R}\) holds for some real $L$, for infinitely many $n$.
\begin{theorem}
Let \(\varepsilon_{1, n + 1}, \varepsilon_{2, n}, \varepsilon_{3, n}, \varepsilon_{4, n + 1} \in \mathbb{R}\) be terms for any four sequences of real numbers with the terms depending upon each $n$, such that 
\begin{eqnarray}
\frac{r(n + 1)^{\frac{1}{n + 1}}}{r(n)^{1/n}}&=&\frac{{2n \choose n}^{\frac{1}{n + 1}} + \varepsilon_{1, n + 1}}{{2(n - 1) \choose n - 1}^{\frac{1}{n}} + \varepsilon_{2, n}},\\
\frac{r(n)^{\frac{1}{n}}}{r(n + 1)^{\frac{1}{n + 1}}}&=&\frac{{2(n - 1) \choose n - 1}^{\frac{1}{n}} + \varepsilon_{3, n}}{{2n\choose n}^{\frac{1}{n + 1}} + \varepsilon_{4, n + 1}},
\end{eqnarray}
where 
\begin{eqnarray}
r(n + 1)^{\frac{1}{n + 1}} + r(n)^{\frac{1}{n}}&=&{2n \choose n}^{\frac{1}{n + 1}} + {2(n - 1) \choose n - 1}^{\frac{1}{n}} + \varepsilon_{1, n + 1} + \varepsilon_{2, n},\\
r(n)^{\frac{1}{n}} + r(n + 1)^{\frac{1}{n + 1}}&=&{2(n - 1) \choose n - 1}^{\frac{1}{n}} + {2n \choose n}^{\frac{1}{n + 1}} + \varepsilon_{3, n} + \varepsilon_{4, n + 1}
\end{eqnarray}
Suppose further that each of the limits 
\begin{eqnarray}
& &\lim_{n\rightarrow \infty}\varepsilon_{1, n + 1},\nonumber \\
& &\lim_{n \rightarrow \infty}\varepsilon_{2, n},\nonumber \\ 
& &\lim_{n\rightarrow \infty}\varepsilon_{3, n},\nonumber \\ 
& &\lim_{n \rightarrow \infty}\varepsilon_{4, n + 1},\nonumber
\end{eqnarray}
converges to the same finite number \(\varepsilon \in \mathbb{R}\). Then for all \(n \geq M\) sufficiently large enough where $M$ is a very large positive integer,
\begin{equation}
r(n)^{\frac{1}{n}} \simeq r(n + 1)^{\frac{1}{n + 1}}.
\end{equation}
That is, for all $n$ sufficiently large enough, $r(n + 1)^{\frac{1}{n + 1}}$ and $r(n)^{1/n}$ are asymptotically equal~\cite{Knopp}, where the symbol $\simeq$ here denotes~\cite{Knopp} ``is asymptotically equal to."
\end{theorem}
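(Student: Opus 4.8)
The plan is to read the conclusion $r(n)^{1/n}\simeq r(n+1)^{1/(n+1)}$ in its standard sense, namely that the ratio $r(n+1)^{1/(n+1)}/r(n)^{1/n}$ tends to $1$ as $n\to\infty$, and to obtain this by passing to the limit on the right-hand side of Eqtn. (8). The only ingredients needed are the limiting values of the two central-binomial-coefficient roots appearing there, together with the hypothesis that the four error sequences share a common finite limit $\varepsilon$. Because that same $\varepsilon$ governs both the numerator and the denominator of Eqtn. (8), the argument collapses once the two binomial roots are shown to have a common limit as well.

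First I would evaluate
$$\lim_{n\to\infty}\binom{2n}{n}^{1/(n+1)}=\lim_{n\to\infty}\binom{2(n-1)}{n-1}^{1/n}=4.$$
For this I would invoke the asymptotics already assembled in Section 1: by Eqtn. (2) and Eqtn. (4) the central binomial coefficients satisfy $\binom{2n}{n}\sim 4^{n}/\sqrt{\pi n}$ and $\binom{2(n-1)}{n-1}\sim 4^{n-1}/\sqrt{\pi(n-1)}$. Raising these to the prescribed powers, the dominant exponential factors contribute $4^{n/(n+1)}\to 4$ and $4^{(n-1)/n}\to 4$, while each algebraic factor such as $(\pi n)^{-1/2}$ raised to the power $1/(n+1)$ or $1/n$ tends to $1$, since its logarithm is of order $(\log n)/n\to 0$. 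Both roots therefore converge to $4$.

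Next I would apply the sum and quotient laws for convergent sequences to Eqtn. (8). The numerator $\binom{2n}{n}^{1/(n+1)}+\varepsilon_{1,n+1}$ converges to $4+\varepsilon$ and, by the hypothesis that $\varepsilon_{2,n}\to\varepsilon$ as well, the denominator $\binom{2(n-1)}{n-1}^{1/n}+\varepsilon_{2,n}$ converges to the same value $4+\varepsilon$. Provided $4+\varepsilon\neq 0$, the quotient law then gives
$$\lim_{n\to\infty}\frac{r(n+1)^{1/(n+1)}}{r(n)^{1/n}}=\frac{4+\varepsilon}{4+\varepsilon}=1,$$
which is precisely the assertion that $r(n)^{1/n}$ and $r(n+1)^{1/(n+1)}$ are asymptotically equal for all $n\geq M$, once $M$ is chosen large enough that this convergence is in force. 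Repeating the computation from Eqtn. (9) with $\varepsilon_{3,n}$ and $\varepsilon_{4,n+1}$ produces the reciprocal limit $1$ and serves as an internal consistency check.

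The sole obstacle is the side condition $4+\varepsilon\neq 0$ required before the quotient law may be used. I expect to remove it by positivity. Solving the defining relations Eqtn. (8) and Eqtn. (10) simultaneously for $\varepsilon_{1,n+1}$ and $\varepsilon_{2,n}$ forces $\binom{2(n-1)}{n-1}^{1/n}+\varepsilon_{2,n}=r(n)^{1/n}$, so the denominator in Eqtn. (8) is nothing but $r(n)^{1/n}$, which lies in $[\sqrt{2},4)$ and is thus bounded below away from $0$. Passing to the limit gives $4+\varepsilon=\lim_{n\to\infty}r(n)^{1/n}\geq\sqrt{2}>0$, so the denominator limit cannot vanish and the quotient law applies. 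With that secured, the remainder is a routine invocation of the limit laws, the genuine content lying in the two Stirling-type evaluations of the binomial roots and in the structural symmetry that forces numerator and denominator to converge to the same limit.
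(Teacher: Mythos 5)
Your proposal is correct and follows essentially the same route as the paper's own proof: evaluate $\lim_{n\to\infty}\binom{2n}{n}^{1/(n+1)}=\lim_{n\to\infty}\binom{2(n-1)}{n-1}^{1/n}=4$ via Stirling-type asymptotics, then apply the limit laws to Eqtn.\ (8) and Eqtn.\ (9) to get $\frac{4+\varepsilon}{4+\varepsilon}=1$. The only difference is that you explicitly verify the side condition $4+\varepsilon\neq 0$ (by identifying the denominator of Eqtn.\ (8) with $r(n)^{1/n}\geq\sqrt{2}$), a point the paper passes over silently; this is a small but genuine improvement in rigor rather than a different approach.
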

\begin{proof}
Each diagonal Ramsey number $r(n)$ is bounded above by the $n - 1$--st central binomial coefficient and $r(n + 1)$ is bounded above by the $n$--th central binomial coefficient~\cite{Chung and Graham}. For each $n$ there exists a real number $\varepsilon_{1, n + 1}$, such that 
\begin{equation}
r(n + 1)^{\frac{1}{n + 1}} = {2n \choose n}^{\frac{1}{n + 1}} + \varepsilon_{1, n + 1}.
\end{equation}
As terms $\varepsilon_{1, n + 1}$ of a convergent sequence, this means
\begin{equation}
\lim_{n \rightarrow \infty}r(n + 1)^{\frac{1}{n + 1}} = \lim_{n \rightarrow}{2n \choose n}^{\frac{1}{n + 1}} + \lim_{n \rightarrow \infty}\varepsilon_{1, n + 1}
\end{equation}
is finite. For each $n$ there exists a real number $\varepsilon_{2, n}$, such that 
\begin{equation}
r(n)^{\frac{1}{n}} = {2(n - 1) \choose n - 1}^{\frac{1}{n}} + \varepsilon_{2, n}.
\end{equation}
In addition as terms $\varepsilon_{2, n}$ of a convergent sequence, this means
\begin{equation}
\lim_{n \rightarrow \infty}r(n)^{\frac{1}{n}} = \lim_{n \rightarrow}{2(n - 1) \choose n - 1}^{\frac{1}{n}} + \lim_{n \rightarrow \infty}\varepsilon_{2, n}
\end{equation}
also is finite. By exactly the same reasoning with \(\varepsilon_{3, n}, \varepsilon_{4, n + 1}\),
\begin{equation}
\lim_{n \rightarrow \infty}r(n)^{\frac{1}{n}} = \lim_{n \rightarrow}{2(n - 1) \choose n - 1}^{\frac{1}{n}} + \lim_{n \rightarrow \infty}\varepsilon_{3, n},
\end{equation}
and
\begin{equation}
\lim_{n \rightarrow \infty}r(n + 1)^{\frac{1}{n + 1}} = \lim_{n \rightarrow}{2n \choose n}^{\frac{1}{n + 1}} + \lim_{n \rightarrow \infty}\varepsilon_{4, n + 1},
\end{equation}
also are finite limits. Here we demonstrate the result of this.\\
\indent From Eqtn. (8),
\begin{eqnarray}
& &\lim_{n \rightarrow \infty}\frac{r(n + 1)^{\frac{1}{n + 1}}}{r(n)^{\frac{1}{n}}}\\
&=&\lim_{n\rightarrow \infty}\frac{{2n \choose n}^{\frac{1}{n + 1}} + \varepsilon_{1, n + 1}}{{2(n - 1) \choose n - 1}^{\frac{1}{n}} + \varepsilon_{2, n}}\\
&=&\lim_{n\rightarrow \infty}\frac{\left(\frac{\sqrt{2\pi(2n)}(2n)^{2n}}{2\pi \cdot n \cdot n^{2n}}\right)^{\frac{1}{n + 1}} + \varepsilon_{1, n + 1}}{\left(\frac{\sqrt{2\pi(2(n - 1))}(2(n - 1))^{(2(n - 1))}}{2\pi \cdot(n - 1)\cdot(n - 1)^{2(n - 1)}}\right)^{\frac{1}{n}} + \varepsilon_{2, n}}  \\
&=&\frac{4 + \varepsilon}{4 + \varepsilon}\nonumber \\
&=&1.
\end{eqnarray}
We applied Stirling's approximation for large $(2n)!$, $n!$, $(2(n - 1))!$, $(n - 1)!$, in Eqtns. (21)--(22) and in Eqtns. (26)--(27). This shows that for all sufficiently large enough $n$ the quotient
\begin{equation}
\frac{r(n + 1)^{\frac{1}{n + 1}}}{r(n)^{1/n}},
\end{equation}
tends to one.\\
\indent Similarly for Eqtn. (9),
\begin{eqnarray}
& &\lim_{n \rightarrow \infty}\frac{r(n)^{\frac{1}{n}}}{r(n + 1)^{\frac{1}{n + 1}}}\\
&=&\lim_{n\rightarrow \infty}\frac{{2(n - 1) \choose n - 1}^{\frac{1}{n}} + \varepsilon_{3, n}}{{2n \choose n}^{\frac{1}{n + 1}} + \varepsilon_{4, n + 1}}\\
&=&\frac{4 + \varepsilon}{4 + \varepsilon} \\
&=&1.
\end{eqnarray}
So for sufficiently large enough $n$, 
\begin{equation}
\frac{r(n)^{\frac{1}{n}}}{r(n + 1)^{\frac{1}{n + 1}}} = 1.
\end{equation}
From Eqtns. (19)--(28) it is clear that \(r(n + 1)^{\frac{1}{n + 1}} \simeq r(n)^{1/n}\) must be true somewhere on $[0, 4]$ (The reason this interval is chosen will be made clear later) for all \(n \geq M\) sufficiently large enough, where $M$ is some very large positive integer.
\end{proof}
Another interpretation of Eqtns. (19)--(28) is that, for all $n$ sufficiently large enough, \(r(n)^{\frac{1}{n}} = O(r(n + 1)^{\frac{1}{n + 1}})\), \(r(n + 1)^{\frac{1}{n + 1}} = O(r(n)^{\frac{1}{n}})\), meaning~\cite{Wiki} 
$$
r(n)^{1/n} = \Theta(r(n + 1)^{\frac{1}{n + 1}}).
$$ 
We also have that
$$
\lim_{n \rightarrow \infty} sup \left|\frac{r(n + 1)^{\frac{1}{n + 1}}}{r(n)^{\frac{1}{n}}}\right| = 1 < \infty,
$$
$$
\lim_{n \rightarrow \infty} sup \left|\frac{r(n)^{\frac{1}{n}}}{r(n + 1)^{\frac{1}{n + 1}}}\right| = 1 < \infty.
$$
The next step is to characterize $r(n)^{1/n}$ as being a sequence that does have the property of convergence to some finite value within some compact set on $\mathbb{R}$.
\begin{corollary}
Let \(M_{1} \geq M, M_{2} \geq M\) be true such that Theorem 2.1 holds for very large positive integers $M_{1}$, $M_{2}$, where $M$ also is very large, and for all sufficiently large enough $n$. Then $r(n)^{1/n}$ is a Cauchy sequence on $[0, 4]$ for all such $n$.
\end{corollary}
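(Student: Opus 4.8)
The plan is to verify the Cauchy criterion for the sequence $\bigl(r(n)^{\frac{1}{n}}\bigr)$ directly: given $\varepsilon > 0$, I would produce an index $N$, taken to exceed both $M_{1}$ and $M_{2}$, so that $\bigl|r(m)^{\frac{1}{m}} - r(n)^{\frac{1}{n}}\bigr| < \varepsilon$ for all $m, n \geq N$. The two facts I would feed in are the boundedness recorded in Eqtns.~(5)--(6), which confines every term to the compact set $[\sqrt{2}, 4] \subset [0, 4]$, and the asymptotic equality of consecutive terms furnished by Theorem 2.1.

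First I would convert the ratio statement of Theorem 2.1 into control of consecutive differences. Since
\[
\left|r(n + 1)^{\frac{1}{n + 1}} - r(n)^{\frac{1}{n}}\right| = r(n)^{\frac{1}{n}}\left|\frac{r(n + 1)^{\frac{1}{n + 1}}}{r(n)^{\frac{1}{n}}} - 1\right|
\]
and $r(n)^{\frac{1}{n}} \leq 4$, the conclusion of Theorem 2.1 that this quotient tends to $1$ forces the adjacent differences to tend to $0$. Hence for each $\delta > 0$ there is a threshold beyond which successive terms differ by less than $\delta$, and the index $N$ above may be enlarged to lie past that threshold.

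The step I expect to be the genuine obstacle is passing from smallness of adjacent gaps to smallness of $\bigl|r(m)^{\frac{1}{m}} - r(n)^{\frac{1}{n}}\bigr|$ for widely separated $m, n \geq N$. Control of consecutive differences by itself does \emph{not} imply the Cauchy property; the partial sums of $\sum 1/k$ are the standard warning, with gaps $1/k \to 0$ and yet no convergence. Telescoping yields only
\[
\left|r(m)^{\frac{1}{m}} - r(n)^{\frac{1}{n}}\right| \leq \sum_{k = n}^{m - 1}\left|r(k + 1)^{\frac{1}{k + 1}} - r(k)^{\frac{1}{k}}\right|,
\]
and to bound the right-hand side uniformly in $m$ one would need the consecutive differences to be \emph{summable}, a strictly stronger property than merely tending to $0$ and one that Theorem 2.1 does not supply on its own.

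To close this gap honestly I would invoke the completeness of $\mathbb{R}$ together with the product decomposition developed later in the paper. Because a real sequence is Cauchy if and only if it converges, the clean route is first to secure the finite limit $\lim_{n \to \infty} r(n)^{\frac{1}{n}} = L$ from the factorisation of $\bigl(r(n)^{\frac{1}{n}}\bigr)$ into a known convergent sequence times an absolutely (respectively uniformly) convergent factor -- the material flagged in the introduction as Corollary 2.4 and Lemmas 5.1--5.3 -- and then to read off the Cauchy conclusion on $[0, 4]$ at no further cost. In that reading Theorem 2.1 and the containment in the compact interval $[\sqrt{2}, 4]$ serve as consistency checks: any limit is pinned between $\sqrt{2}$ and $4$ and adjacent terms coalesce, while the decisive Cauchy estimate is underwritten by the convergence of the decomposed product rather than by adjacent-difference control alone.
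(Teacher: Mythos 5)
Your diagnosis of the obstacle is exactly right, and it is worth saying plainly that the paper's own proof of this corollary is precisely the direct argument you warn against. The paper passes from the asymptotic equality $r(n)^{1/n} \simeq r(n+1)^{\frac{1}{n+1}}$ of Theorem 2.1 to the assertion $|r(n)^{1/n} - r(m)^{1/m}| < \epsilon_{1}$ for arbitrary $m > n$ beyond some threshold, with no bridging estimate: the displayed equations in that proof merely rewrite the difference using the $\varepsilon$-decomposition and then declare it small for every $\epsilon_{1} > 0$. As you note, control of consecutive gaps --- even the stronger statement that the gaps tend to $0$ --- does not yield the Cauchy property; the partial sums of $\sum 1/k$, or the sequence $\log n$ itself, satisfy $a_{n+1}/a_{n} \to 1$ and $a_{n+1} - a_{n} \to 0$ while diverging. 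Boundedness in $[0,4]$ rules out divergence to infinity but not non-Cauchy oscillation (consider $\sin(\log n)$, which is bounded with vanishing consecutive differences and no limit). So the paper's proof has a genuine gap, and your refusal to follow that route is correct.

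Your repair --- establish $\lim_{n\to\infty} r(n)^{1/n} = L$ first via the product decomposition of Corollary 2.4 and Lemmas 5.1--5.3, then read off the Cauchy property from convergence in the complete space $\mathbb{R}$ --- is logically sound as a freestanding argument, but it inverts the paper's architecture: the paper uses this corollary to deduce convergence in Corollary 2.2, so importing the later convergence results here is circular unless they are proved independently of Corollaries 2.1 and 2.2. They are presented later as independent, but they carry their own difficulty, namely that nothing prevents the ratio $|\varepsilon_{n}|/\binom{2(n-1)}{n-1}$ from tending to $1$, i.e.\ the points $z_{n}$ from approaching the boundary of the unit disc, where the absolute/uniform convergence of the binomial series no longer gives uniform control of $(1-z_{n})^{1/n}$. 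In short: you have correctly identified that the statement cannot be derived from Theorem 2.1 alone, which is what the paper attempts, and your alternative is the only viable strategy, subject to verifying that the convergence results it relies on do not themselves depend on this corollary.
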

\begin{proof}
When $n$ is sufficiently large enough such that \(r(n)^{1/n} \simeq r(n + 1)^{\frac{1}{n + 1}}\) the terms of the sequence must get arbitrarily closer and closer to each other, such that for any \(\epsilon > 0\), 
\begin{equation}
d(r(n)^{1/n}, r(n + 1)^{\frac{1}{n + 1}}) < \epsilon, 
\end{equation}
is true. Let \(n \geq M_{1}, m \geq n + 1 \geq M_{2} \geq M_{1} + 1\) and \(\epsilon_{1} > 0\) any real number. Then from Theorem 2.1 and on $[0, 4]$ and with the usual topology on $\mathbb{R}$,
\begin{eqnarray}
|r(n)^{1/n} - r(m)^{1/m}|&<&\epsilon_{1}, \forall \epsilon_{1} > 0.\\
|r(m)^{1/m} - r(n)^{1/n}|&<&\epsilon_{1}, \forall \epsilon_{1} > 0,
\end{eqnarray}
since
$$
|r(n)^{1/n} - r(m)^{1/m}| = \left|{2(n - 1) \choose n - 1}^{1/n} - {2(m - 1) \choose m - 1}^{1/m} + \varepsilon_{1, n} - \varepsilon_{3, m}\right| < \epsilon_{1},
$$
$$
|r(m)^{1/m} - r(n)^{1/n}| = \left|{2(m - 1) \choose m - 1}^{1/m} - {2(n - 1) \choose n - 1}^{1/n} + \varepsilon_{3, m} - \varepsilon_{1, n}\right| < \epsilon_{1}.
$$
Then the sequence $r(n)^{1/n}$ must be Cauchy on $[0, 4]$ by definition for some large integer $M$ such that \(n, m > M\).
\end{proof}
The differences in Eqtns. (30)--(31) tend to zero for all $n$ sufficiently large enough. This is why we considered the larger point set $[0, 4]$. With the following proof we assert nothing about the actual value for the finite limit for $r(n)^{1/n}$ on $[0, 4]$. Nor can anyone claim that the limit is zero. In fact \(r(n)^{1/n} \in [\sqrt{2}, 4] \subset [0, 4]\) where the greatest lower bound is $\sqrt{2}$. We show only that a finite limit \(r(n)^{1/n} \rightarrow L \in [0, 4]\) does exist somewhere on $[0, 4]$. 
\begin{corollary}
For all $n$ sufficiently large enough such that Theorem 2.1 holds, the sequence $r(n)^{1/n}$ is convergent on $[0, 4]$.
\end{corollary}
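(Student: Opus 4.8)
The plan is to combine the Cauchy property already in hand with the completeness of the ambient interval, so that convergence follows with essentially no new computation. First I would invoke the preceding corollary, which asserts that $(r(n)^{1/n})$ is a Cauchy sequence on $[0,4]$ for all $n$ beyond the large threshold $M$. The one analytic fact I then need to supply is that $[0,4]$, equipped with the usual metric $d(x,y)=|x-y|$, is a complete metric space: it is a closed subset of the complete space $\mathbb{R}$, and a closed subspace of a complete space is itself complete. Equivalently, by the Heine--Borel theorem $[0,4]$ is compact, and every compact metric space is complete, so either route secures completeness.

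Next I would appeal to the defining property of complete spaces: every Cauchy sequence in a complete metric space converges to a limit lying in that space. Applying this to the tail $(r(n)^{1/n})_{n\geq M}$ produces a real number $L$ with $r(n)^{1/n}\to L$ as $n\to\infty$. To confine the limit to the stated interval I would use the bound recorded earlier in the excerpt, namely $r(n)^{1/n}\in[\sqrt{2},4]\subset[0,4]$ for every sufficiently large $n$. Since $[0,4]$ is closed and every term of the relevant tail lies in it, the limit cannot escape the interval, so $L\in[0,4]$; in fact the same closedness argument applied to $[\sqrt{2},4]$ pins $L$ down to $[\sqrt{2},4]$, consistent with Conjecture 2.1.

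The argument is brief because the genuine work was carried out in Theorem 2.1 and in the Cauchy corollary that precedes this statement; the only point demanding care is the logical hygiene of the final step. I must make sure that ``convergent on $[0,4]$'' is read as convergence to a point of $[0,4]$, which is exactly what completeness together with closedness delivers, rather than mere convergence of some numerical sequence whose limit might a priori lie outside. The lone mild obstacle is to confirm that the threshold $M$ from the Cauchy corollary is admissible here as well, so that the tail to which completeness is applied is genuinely Cauchy; this is immediate, since both statements are phrased uniformly for all $n\geq M$ with the same $M$, and no fresh estimates on the error terms $\varepsilon_{i,n}$ are required.
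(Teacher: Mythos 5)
Your argument is correct and follows essentially the same route as the paper's own proof: both deduce convergence from the Cauchy property established in the preceding corollary together with the completeness of the closed, bounded set $[0,4]$. The only difference is that you spell out why $[0,4]$ is complete and why the limit stays in the interval, which the paper leaves implicit.
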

\begin{proof}
By Theorem 2.1 and by Corollary 2.1 the sequence $r(n)^{1/n}$ is Cauchy on the compact set $[0, 4]$ which also is a complete metric space on $\mathbb{R}$, and every Cauchy sequence within a closed and bounded set converges.
\end{proof}
\begin{theorem}
There exists always, some \(\varepsilon_{n} \in \mathbb{R}\) depending upon $n$ and even for infinitely many $n$ and positive integers \(M_{1}, M\) where \(M \geq M_{1}\), such that for real number \(\epsilon > 0\), for all integer \(n \geq M\) and where $L$ is some positive real number, the limit
\begin{equation}
\lim_{n \rightarrow \infty}r(n)^{\frac{1}{n}} = L < \infty
\end{equation}
exists.
\end{theorem}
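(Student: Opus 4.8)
The plan is to obtain Theorem 2.2 as the consolidation of the two preceding corollaries, so that nearly all of the analytic work has already been front-loaded into Theorem 2.1 and Corollary 2.1. First I would recall the defining decomposition of Eqtn.~(17), namely $r(n)^{1/n} = {2(n - 1) \choose n - 1}^{1/n} + \varepsilon_{2, n}$, and simply set $\varepsilon_{n} := \varepsilon_{2, n}$; this is the error sequence whose existence the statement asserts. By the hypothesis carried over from Theorem 2.1, the sequence $(\varepsilon_{2, n})$ converges to a finite $\varepsilon \in \mathbb{R}$, so the entire burden reduces to evaluating the limit of the central--binomial factor.

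Second, I would compute $\lim_{n \rightarrow \infty}{2(n - 1) \choose n - 1}^{1/n}$ using Stirling's approximation exactly as in Eqtns.~(21)--(22): writing ${2(n - 1) \choose n - 1} \sim 4^{n - 1}/\sqrt{\pi(n - 1)}$, the factor $4^{(n - 1)/n} \rightarrow 4$ while $(\pi(n - 1))^{1/(2n)} \rightarrow 1$, since that exponent tends to zero against a merely polynomially growing base. Hence ${2(n - 1) \choose n - 1}^{1/n} \rightarrow 4$. Applying the sum rule for limits of convergent sequences then yields $\lim_{n \rightarrow \infty} r(n)^{1/n} = 4 + \varepsilon =: L$, a finite real number, valid for all $n \geq M$.

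Rather than rely solely on that algebraic identity, I would also invoke Corollary 2.2 directly as an independent route: Corollary 2.1 established that $(r(n)^{1/n})$ is Cauchy on $[0, 4]$, and since $[0, 4]$ is a closed, bounded, hence complete subspace of $\mathbb{R}$, this Cauchy sequence converges to a limit $L \in [0, 4]$. This furnishes the existence and finiteness of $L$ without reference to the explicit value $4 + \varepsilon$, and lets me identify the thresholds $M, M_{1}$ of the statement with those supplied by Corollary 2.1. To secure that $L$ is a \emph{positive} real number I would then appeal to the two--sided estimate of Eqtn.~(5), $\sqrt{2} < r(n)^{1/n} < 4$, which is preserved under passage to the limit and forces $L \in [\sqrt{2}, 4]$; in particular $L \geq \sqrt{2} > 0$.

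The main obstacle I anticipate is not the limit computation, which is routine, but reconciling the two interval choices: $[0, 4]$ is needed for the completeness argument behind the Cauchy conclusion, whereas the sequence actually lives in the sharper $[\sqrt{2}, 4]$. I must be careful to extract the positivity of $L$ from the tighter bound of Eqtn.~(5) and not from the Cauchy argument alone, which by itself excludes nothing below the value $0$. Once that distinction is respected, the statement follows by combining the convergent value $L$ with its confirmed location in $[\sqrt{2}, 4] \subset [0, 4]$.
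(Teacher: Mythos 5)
Your proposal has a genuine gap, and it is circularity. Your first route sets $\varepsilon_{n} := \varepsilon_{2, n}$, where $\varepsilon_{2, n}$ is \emph{defined} by $r(n)^{1/n} = {2(n - 1) \choose n - 1}^{1/n} + \varepsilon_{2, n}$, and then asserts that $(\varepsilon_{2, n})$ converges ``by the hypothesis carried over from Theorem 2.1.'' But that convergence is only a \emph{hypothesis} of Theorem 2.1 --- it is never established anywhere in the paper --- and since ${2(n - 1) \choose n - 1}^{1/n} \rightarrow 4$, the convergence of $(\varepsilon_{2, n})$ is exactly equivalent to the convergence of $(r(n)^{1/n})$, i.e.\ to the conclusion of Theorem 2.2. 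Theorem 2.2 is stated unconditionally, so you cannot import that hypothesis for free; doing so reduces the theorem to ``if $r(n)^{1/n} - {2(n - 1) \choose n - 1}^{1/n}$ converges then $r(n)^{1/n}$ converges,'' which is trivial and not what is claimed. Your second route, invoking Corollary 2.2, inherits the same defect: Corollary 2.2 rests on Corollary 2.1, which rests on Theorem 2.1 and its unproven convergence hypotheses, and in any case the conclusion of Corollary 2.2 \emph{is} the conclusion of Theorem 2.2, so citing it is a restatement rather than a proof.

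The paper itself declines your route explicitly (``we wish to prove Theorem 2.2 by other means''). Its $\varepsilon_{n}$ is a different object from yours: it is defined at the level of the Ramsey numbers themselves, $\varepsilon_{n} = r(n) - {2(n - 1) \choose n - 1} \leq 0$, which exists unconditionally because $r(n) \leq {2(n - 1) \choose n - 1}$. The paper then factors $r(n)^{1/n} = {2(n - 1) \choose n - 1}^{1/n}\left(1 - \frac{|\varepsilon_{n}|}{{2(n - 1) \choose n - 1}}\right)^{1/n}$ and runs an $\epsilon$--$M$ estimate on the second factor. Whether that estimate itself succeeds is a separate question --- it quietly presupposes that the second factor converges to $L/4$, which is again the point at issue --- but the decomposition it uses is at least unconditional, whereas yours assumes the conclusion at the outset. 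Your observation that positivity of $L$ must come from the lower bound $\sqrt{2} < r(n)^{1/n}$ rather than from the Cauchy/completeness argument on $[0, 4]$ is correct and worth keeping; the convergence itself is what your argument does not deliver.
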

\begin{proof}
We can show by the use of two lemmas, Lemma 5.1 and Lemma 5.3 (See Section 5) that $r(n)^{\frac{1}{n}}$ can be expressed as the product of two factors, namely
$$
r(n)^{\frac{1}{n}} = {2(n - 1) \choose n - 1}^{\frac{1}{n}}f_{n}\left(\frac{|\varepsilon_{n}|}{{2(n - 1) \choose n - 1}}\right),
$$
where the first factor is convergent and the second is from a function $f_{n}(z)$ analytic inside the disc \(|z| < 1\) and which is uniformly convergent inside this disc (See Lemma 5.1, Lemma 5.3, Section 5). However at the present time we wish to prove Theorem 2.2 by other means.\\
\indent The diagonal Ramsey number $r(n)$ is bounded above~\cite{Chung and Graham}, as
\begin{equation}
r(n) \leq {2(n - 1) \choose n - 1},
\end{equation}
which means, certainly and for each $n$ as \(n \rightarrow \infty\), 
$$
{2(n - 1) \choose n - 1} - r(n) \geq 0.
$$
So there has got to exist real $\varepsilon_{n}$  less than or equal to zero (infinitely often if need be) and for each $n$ as \(n \rightarrow \infty\), for which
\begin{equation}
r(n) = {2(n - 1) \choose n - 1} + \varepsilon_{n} \leq {2(n - 1) \choose n - 1},
\end{equation}
always is true. Thus
\begin{eqnarray}
r(n)^{\frac{1}{n}}&=&\left({2(n - 1) \choose n - 1} + \varepsilon_{n}\right)^{\frac{1}{n}} \Longrightarrow \\
\lim_{n \rightarrow \infty}r(n)^{\frac{1}{n}}&=&\lim_{n \rightarrow \infty}\left({2(n - 1) \choose n - 1} + \varepsilon_{n}\right)^{\frac{1}{n}}.
\end{eqnarray}
But since \(\varepsilon_{n} \leq 0\) must be true for each $n$ and infinitely often, we have
\begin{equation}
r(n)^{\frac{1}{n}} = \left({2(n - 1) \choose n - 1} + \varepsilon_{n}\right)^{\frac{1}{n}} \leq {2(n - 1) \choose n - 1}^{\frac{1}{n}}
\end{equation}
which implies
\begin{eqnarray}
\lim_{n \rightarrow \infty}r(n)^{\frac{1}{n}}&=&\lim_{n \rightarrow \infty}\left({2(n - 1) \choose n - 1} + \varepsilon_{n}\right)^{\frac{1}{n}}\\
                                             &\leq&\lim_{n \rightarrow \infty}{2(n - 1) \choose n - 1}^{\frac{1}{n}}\nonumber \\
                                             &\approx&4,
\end{eqnarray}
which one obtains by applying Stirling's approximation (for large $n$) to 
$$
(2(n - 1))!, (n - 1)!, 
$$
and to the central binomial coefficient.\\
\indent Now for all \(n \geq M\) where $M$ is some very large integer, we show there exists some \(\epsilon > 0\) such that the limit $L$ exists (i.e., by definition of ``limit") in Eqtn.(32). Since \(\varepsilon_{n} \leq 0\) is true infinitely often, meaning for each of infinitely many integers $n$ in Eqtn. (34), then for all \(n \geq M\),
\begin{eqnarray}
|r(n)^{\frac{1}{n}} - L|&=&\left|\left({2(n - 1) \choose n - 1} + \varepsilon_{n}\right)^{\frac{1}{n}} - L\right| \\
                        &\leq&\left|{2(n - 1) \choose n - 1}^{\frac{1}{n}} - 4  \right| < \epsilon.\nonumber               
\end{eqnarray}
In fact replacing $\varepsilon_{n}$ with $-|\varepsilon_{n}|$, let $M_{1}$ be any large positive integer, such that
$$
M_{1} \gg \left\lceil\left|\frac{\log\left(1 - \frac{|\varepsilon_{n}|}{{2(n - 1) \choose n - 1}}\right)}{\log \frac{\epsilon}{4}}\right|\right\rceil,
$$
where, for all \(n > M_{1}\),
$$
\left|\left(1 - \frac{|\varepsilon_{n}|}{{2(n - 1) \choose n - 1}}\right)^{\frac{1}{n}} - \frac{L}{4}\right| < \frac{\epsilon}{4}.
$$
It follows that
\begin{eqnarray}
& &\left|\left({2(n - 1) \choose n - 1} + \varepsilon_{n}\right)^{\frac{1}{n}} - L\right|\nonumber \\
&=&\left|\left({2(n - 1) \choose n - 1} - |\varepsilon_{n}|\right)^{\frac{1}{n}} - L\right|\nonumber \\
&=&\left|{2(n - 1) \choose n - 1}^{\frac{1}{n}}\left(1 - \frac{|\varepsilon_{n}|}{{2(n - 1) \choose n - 1}}\right)^{\frac{1}{n}} - 4\cdot\frac{L}{4}\right|\nonumber\\
&<&4\cdot\frac{\epsilon + L}{4} - 4\cdot \frac{L}{4} = \epsilon.\nonumber
\end{eqnarray}
Let \(M \geq M_{1}\). Then for any \(n \geq M\), there exists \(\epsilon > 0\) such that if \(L \in (\sqrt{2}, 4)\) as \(n \rightarrow \infty\), the finite limit in Eqtn. (32) and in Eqtns. (38)--(39) will hold, since
\begin{equation}
|r(n)^{\frac{1}{n}} - L| = \left|\left({2(n - 1) \choose n - 1} + \varepsilon_{n}\right)^{\frac{1}{n}} - L\right| < \epsilon.
\end{equation}
So there exists some open neighborhood 
\begin{equation}
(L -\epsilon, L + \epsilon),
\end{equation}
of $L$, for which \(r(n)^{\frac{1}{n}} \in (L -\epsilon, L + \epsilon)\) is true infinitely often as \(n \rightarrow \infty\). This indicates as \(n \rightarrow \infty\) the limit \(r(n)^{1/n} \rightarrow L < 4\) must be to some finite real number \(L \in [\sqrt{2}, 4]\), since g.l.b. \(r(n)^{\frac{1}{n}} = \sqrt{2}\) and l.u.b. \(r(n)^{\frac{1}{n}} = 4\).
\end{proof}
If there exists no \(\varepsilon_{n} \leq 0\) infinitely often in Eqtns. (37)--(39) such that the limit in Eqtn. (38) exists and is finite and such that
\begin{equation}
\left|\left({2(n - 1) \choose n - 1} + \varepsilon_{n}\right)^{\frac{1}{n}} - L\right| < \epsilon
\end{equation}
holds for any \(\epsilon > 0\), then Eqtn. (32) is false. This means Theorem 2.2 is not a ``trivial" result.\\
\indent As an alternative to the usage
$$
r(n) = {2(n - 1) \choose n - 1} + \varepsilon_{n},
$$
in Eqtn. (34) we instead can write
$$
r(n) = {2(n - 1) \choose n - 1} - |\varepsilon_{n}|,
$$
where it is understood that \(\varepsilon_{n} \leq 0\) and where
$$
|\varepsilon_{n}| = {2(n - 1) \choose n - 1} - r(n).
$$
Henceforth we shall adopt this usage, namely the use of $|\varepsilon_{n}|$ instead of $\varepsilon_{n}$, where \(\varepsilon_{n} \leq 0\). 
\begin{corollary}
Let \(n \geq M\) where $M$ is a very large integer, and let \(\varepsilon_{n} \leq 0\) be as described in Theorem 2.2 and in Eqtns. (34)--(37). Then 
\begin{equation}
r(n)^{\frac{1}{n}} \approx {2(n - 1) \choose n - 1}^{\frac{1}{n}}\left(1 - {\frac{1}{n} \choose 1}\left(\frac{|\varepsilon_{n}|}{{2(n - 1) \choose n - 1}}\right) + O\left(\left(\frac{|\varepsilon_{n}|}{{2(n - 1) \choose n - 1}}\right)^{2}\right)\right)
\end{equation}
\begin{equation}
\leq {2(n - 1) \choose n - 1}^{\frac{1}{n}}.
\end{equation}
\end{corollary}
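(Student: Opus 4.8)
The plan is to treat this corollary as a direct application of the generalized (Newton) binomial series to the representation of $r(n)$ adopted immediately above the statement. Recall that, having replaced $\varepsilon_n$ by $-|\varepsilon_n|$ with $\varepsilon_n \le 0$, Eqtn. (37) may be written $r(n) = {2(n-1) \choose n-1} - |\varepsilon_n|$, where $|\varepsilon_n| = {2(n-1) \choose n-1} - r(n)$. First I would factor the central binomial coefficient out of the $n$-th root, writing
$$
r(n)^{1/n} = {2(n-1) \choose n-1}^{1/n}\left(1 - \frac{|\varepsilon_n|}{{2(n-1) \choose n-1}}\right)^{1/n}.
$$
Setting $x_n = |\varepsilon_n| \big/ {2(n-1) \choose n-1}$, the entire corollary reduces to expanding the single factor $(1 - x_n)^{1/n}$.

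The second step is to verify that the argument $x_n$ lies in $[0,1)$, which is exactly the region of convergence of the generalized binomial series. Since $r(n)$ is a positive integer, $r(n) \ge 1 > 0$, so $|\varepsilon_n| = {2(n-1) \choose n-1} - r(n) < {2(n-1) \choose n-1}$, giving $0 \le x_n < 1$. With this bound in hand the Newton binomial theorem applies and
$$
(1 - x_n)^{1/n} = \sum_{k=0}^{\infty} {\frac{1}{n} \choose k}(-x_n)^k = 1 - {\frac{1}{n} \choose 1}x_n + {\frac{1}{n} \choose 2}x_n^2 - \cdots,
$$
a series that converges absolutely for $|x_n| < 1$. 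Collecting the constant and linear terms and absorbing the tail $\sum_{k \ge 2}{\frac{1}{n} \choose k}(-x_n)^k$ into $O(x_n^2)$ then yields precisely the expansion displayed in Eqtn. (45), after restoring the factor ${2(n-1) \choose n-1}^{1/n}$.

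The final step is the inequality (46). Because $0 \le x_n < 1$, the base satisfies $1 - x_n \in (0,1]$, and raising a number in $(0,1]$ to the positive exponent $1/n$ keeps it in $(0,1]$; hence $(1 - x_n)^{1/n} \le 1$. Multiplying through by the positive factor ${2(n-1) \choose n-1}^{1/n}$ gives $r(n)^{1/n} \le {2(n-1) \choose n-1}^{1/n}$, in agreement with the bound already established in Eqtns. (37)--(39).

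I do not expect a serious obstacle, since the result is essentially a restatement of the binomial expansion; the one point deserving care is the convergence of the series, which hinges on the strict inequality $x_n < 1$. As noted, this follows from $r(n) \ge 1$ alone, so it holds for \emph{every} $n$ and not merely for large $n$. The symbol $\approx$ in (45) should be read as ``equal up to the displayed order,'' the $O$-term standing for the absolutely convergent tail guaranteed by $|x_n| < 1$; in this sense (45) is in fact an exact identity once the full tail is retained, and the truncation error is genuinely of order $x_n^2$.
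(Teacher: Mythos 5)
Your proposal is correct and follows essentially the same route as the paper's own proof: write $r(n) = {2(n-1) \choose n-1} - |\varepsilon_{n}|$, factor out the central binomial coefficient, expand $(1 - x_{n})^{1/n}$ by the generalized binomial series with $x_{n} = |\varepsilon_{n}|\big/{2(n-1) \choose n-1}$, truncate at first order with an $O(x_{n}^{2})$ tail, and observe that the second factor is at most one. Your added justification that $0 \le x_{n} < 1$ (from $r(n) > 0$) and that this is what licenses both the convergence of the series and the final inequality is a welcome tightening of details the paper leaves implicit, but it is not a different argument.
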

\begin{proof}
For all large \(n \geq M\),
\begin{eqnarray}
r(n)^{\frac{1}{n}}&=&\left({2(n - 1) \choose n - 1} - |\varepsilon_{n}|\right)^{\frac{1}{n}} \\
                  &=&{2(n - 1) \choose n - 1}^{\frac{1}{n}}\left(1 - \frac{|\varepsilon_{n}|}{{2(n - 1) \choose n - 1}}\right)^{\frac{1}{n}} \\
                  &=&{2(n - 1) \choose n - 1}^{\frac{1}{n}}\left(1 + \sum_{i = 1}^{\infty}(-1)^{i}{\frac{1}{n} \choose i}\left(\frac{|\varepsilon_{n}|}{{2(n - 1) \choose n - 1}}\right)^{i}\right)\nonumber \\
                  &\approx&{2(n - 1) \choose n - 1}^{\frac{1}{n}}\left(1 - {\frac{1}{n} \choose 1}\left(\frac{|\varepsilon_{n}|}{{2(n - 1) \choose n - 1}}\right) + O\left(\left(\frac{|\varepsilon_{n}|}{{2(n - 1) \choose n - 1}}\right)^{2}\right)\right)\nonumber \\
                  &\leq&{2(n - 1) \choose n - 1}^{\frac{1}{n}}.      
\end{eqnarray}
\end{proof}
At the present time one does not know the value of $r(5)$. One does not know even any algorithm by which to determine by general means $r(M)$ where $M$ is a very large integer. Nevertheless we still can use Corollary 2.3 to determine $r(n)^{\frac{1}{n}}$ to first order approximation when one knows $r(n)$. Let \(n = 4\). Then by Corollary 2.3,
\begin{eqnarray}
r(4) = 18&\Longrightarrow&|\varepsilon_{4}| = 2\nonumber \\
         &\Longrightarrow&r(4)^{\frac{1}{4}} \approx {6 \choose 3}^{\frac{1}{4}}\left(1 - {\frac{1}{4}}\cdot \frac{2}{{6 \choose 3}}\right)\nonumber \\
         &=&20^{\frac{1}{4}}\cdot \frac{39}{40} = 2.0618\cdots\nonumber
\end{eqnarray}
Next we establish that the limit \(r(n)^{\frac{1}{n}} \rightarrow L\) in Eqtns. (46)--(48) must converge as \(n \rightarrow \infty\), by using the result in the proof to Corollary 2.3.
\begin{corollary}
For each $n$ and as \(n \rightarrow \infty\), the limit
\begin{equation}
\lim_{n \rightarrow \infty}r(n)^{\frac{1}{n}} = \lim_{n\rightarrow \infty}{2(n - 1) \choose n - 1}^{\frac{1}{n}}\left(1 + \sum_{i = 1}^{\infty}(-1)^{i}{\frac{1}{n} \choose i}\left(\frac{|\varepsilon_{n}|}{{2(n - 1) \choose n - 1}}\right)^{i}\right),
\end{equation}
exists and is finite.
\end{corollary}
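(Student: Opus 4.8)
The plan is to read the displayed equality as an exact identity that already holds for each finite $n$, and then to pass to the limit using the results established above. Writing $x_n = |\varepsilon_n|/{2(n-1) \choose n-1}$, the bounds $0 < r(n) \le {2(n-1) \choose n-1}$ from Eqtn. (34) give $0 \le x_n < 1$, so the generalized binomial theorem applies and the bracketed series on the right is precisely the absolutely convergent expansion of $(1 - x_n)^{1/n}$. Hence for every such $n$ the right-hand side equals ${2(n-1) \choose n-1}^{1/n}(1 - x_n)^{1/n} = r(n)^{1/n}$, exactly as in Eqtns. (46)--(47) of Corollary 2.3. Thus the two sides of the claimed limit are the same sequence, and it suffices to prove that this single sequence converges to a finite value.

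First I would split the right-hand side into its two factors, $a_n = {2(n-1) \choose n-1}^{1/n}$ and $b_n = 1 + \sum_{i=1}^{\infty}(-1)^i {\frac{1}{n} \choose i} x_n^i$, and record that $a_n \to 4$ by Stirling's approximation, which is exactly the computation already carried out in Eqtns. (38)--(39). By the limit-of-a-product law, the corollary then reduces to showing that $b_n$ converges to a finite limit, after which $\lim_{n \to \infty} a_n b_n = (\lim_{n\to\infty} a_n)(\lim_{n\to\infty} b_n)$.

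To establish convergence of $b_n$, rather than estimating $(1 - x_n)^{1/n}$ directly, I would factor through $r(n)^{1/n}$. By Theorem 2.2 together with Corollary 2.2 the sequence $r(n)^{1/n}$ converges to a finite limit $L \in [\sqrt{2}, 4]$. Since $a_n \to 4 \neq 0$, the quotient rule for limits gives $b_n = r(n)^{1/n}/a_n \to L/4$, a finite value. Combining the two factors yields $\lim_{n \to \infty} a_n b_n = 4 \cdot (L/4) = L < \infty$, which is exactly Eqtn. (49), completing the argument.

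The main obstacle is precisely the convergence of the series factor $b_n$. It is tempting to argue that $(1 - x_n)^{1/n} \to 1$ on the grounds that the exponent $1/n \to 0$; but this is not automatic, since $r(n)$ is exponentially smaller than the central binomial coefficient, so $x_n$ may tend to $1$ and force $1 - x_n \to 0$ to compete with $1/n \to 0$ in an indeterminate manner. The way around this is not to analyze $b_n$ in isolation but to recover it as the quotient $r(n)^{1/n}/a_n$ and to rely on the already-proved convergence of $r(n)^{1/n}$ from Theorem 2.2, so that the value $L/4$ of $\lim_{n\to\infty} b_n$ is read off from the product structure rather than computed by a direct asymptotic estimate.
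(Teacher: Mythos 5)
Your argument is correct relative to the results that precede this corollary, but it takes a genuinely different route from the paper's own proof. Both proofs start identically: they read Eqtn.\ (49) as the exact identity \(r(n)^{1/n} = a_n(1 - x_n)^{1/n}\) with \(a_n = {2(n - 1) \choose n - 1}^{1/n}\) and \(x_n = |\varepsilon_n|/{2(n - 1) \choose n - 1} \in [0, 1)\), already derived in Corollary 2.3, and both send the first factor to \(4\) via Stirling. They part ways on the second factor \(b_n\). The paper tries to prove its convergence \emph{directly}: it notes the binomial series converges absolutely in \(i\) for each fixed \(n\) because \(0 \le x_n < 1\), observes the resulting values are bounded, and then appeals to Bolzano--Weierstrass plus a forward reference to Section 5 to claim there is exactly one limit point. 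You instead recover \(b_n\) as the quotient \(r(n)^{1/n}/a_n\) and let the already-established convergence of \(r(n)^{1/n}\) (Theorem 2.2, Corollary 2.2) together with \(a_n \to 4 \neq 0\) do all the work. Your route is logically tighter: absolute convergence in \(i\) for fixed \(n\) says nothing about convergence in \(n\), Bolzano--Weierstrass alone yields only a subsequential limit, and you correctly flag the indeterminacy between \(x_n \to 1\) and \(1/n \to 0\) that the paper's direct argument glosses over (indeed \(r(n)\) is exponentially smaller than the central binomial coefficient, so \(1 - x_n \to 0\)). What your route gives up is independence: by factoring through Theorem 2.2 it cannot serve as the alternative, self-contained ``product of two convergent sequences'' derivation that the paper advertises this corollary to be, and it essentially anticipates the paper's own Corollary 2.5, which obtains \(\lim_{n \to \infty} b_n = L/4\) by exactly your division step.
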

\begin{proof}
We derived the product with the binomial series expansion in Eqtn. (49) already in Eqtns. (46)--(48) in the proof to Corollary 2.3. The expression
$$
\left(1 - \frac{|\varepsilon_{n}|}{{2(n - 1) \choose n - 1}}\right)^{\frac{1}{n}} = 1 + \sum_{i = 1}^{\infty}(-1)^{i}{\frac{1}{n} \choose i}\left(\frac{|\varepsilon_{n}|}{{2(n - 1) \choose n - 1}}\right)^{i},
$$
is equal to the power series expansion
\begin{equation}
(1 - z)^{\frac{1}{n}} = 1 + \sum_{i = 1}^{\infty}(-1)^{i}{\frac{1}{n} \choose i}z^{i},
\end{equation}
on $\mathbb{C}$ which is both holomorphic (or in analogous terminology, analytic) and absolutely convergent everywhere in the interior of the unit disk \(|z| < 1\), when \(|z| = ||\varepsilon_{n}|/{2(n - 1) \choose n - 1}| < 1\). The values \(z = |\varepsilon_{n}|/{(2(n - 1) \choose n - 1}\) are all on the real line and the function has a branch point at \(z = 1\). Therefore if need be to avoid any multiple values around any branch cut or branch point when we consider a complex function like
$$
w(z) = (1 - z)^{1/n},
$$
we can restrict our attention to values of the function when \(|z| < 1\) on the first Riemann sheet and for the principal branch for each $n$, on $\mathbb{C}$. Or as an alternative we can choose to expand
$$
(1 - x)^{\frac{1}{n}} = 1 + \sum_{i = 1}^{\infty}(-1)^{i}{\frac{1}{n} \choose i}x^{i},
$$
on $\mathbb{R}^{2}$, instead of using the expansion in Eqtn. (50) on $\mathbb{C}$.\\
\indent In the inequality in Eqtn. (34) the expression
\begin{equation}
r(n) = {2(n - 1) \choose n - 1} + \varepsilon_{n},
\end{equation}
on the right hand side cannot be equal to or less than zero because \(r(n) > 0\) for all $n$. Furthermore in Eqtn. (35) and in Eqtns. (46)--(47), $r(n)^{\frac{1}{n}}$ is bounded below by $\sqrt{2}$ for infinitely many $n$ as \(n \rightarrow \infty\). So with \(\varepsilon_{n} \leq 0\) true for each $n$ as \(n \rightarrow \infty\),
\begin{eqnarray}
r(n) = {2(n - 1) \choose n - 1} + \varepsilon_{n} > 0&\Longrightarrow&{2(n - 1) \choose n - 1} - |\varepsilon_{n}| > 0 \\
                                                     &\Longrightarrow&{2(n - 1) \choose n - 1} = r(n) + |\varepsilon_{n}| > 0\nonumber\\
                                                     &\Longrightarrow&0 \leq \frac{|\varepsilon_{n}|}{{2(n - 1) \choose n - 1}} < 1,                          
\end{eqnarray}
which exactly is what one requires for the binomial series to converge to a finite limit, both for each $n$ and as \(n \rightarrow \infty\), on the right hand side of Eqtn. (49). Moreover since \(r(n) > 0\) is true for each and every \(n \in [3, \infty)\), the quotient 
$$
\frac{|\varepsilon_{n}|}{{2(n - 1) \choose n - 1}},
$$
never is equal to one, since
$$
\frac{|\varepsilon_{n}|}{{2(n - 1) \choose n - 1}} = \frac{{2(n - 1) \choose n - 1} - r(n)}{{2(n - 1) \choose n - 1}} < 1.
$$
In Eqtn. (49) the infinite series converges absolutely to a finite limit for each $n$ as \(i \rightarrow \infty\) and for \(0 \leq \frac{|\varepsilon_{n}|}{{2(n - 1) \choose n - 1}} < 1\). What is more as $n$ increases without bound, the infinite series in the second factor in the product on the right hand side in Eqtn. (49) converges absolutely as \(i \rightarrow \infty\) for each $n$ regardless of the value of $n$, for \(0 \leq \frac{|\varepsilon_{n}|}{{2(n - 1) \choose n - 1}} < 1\) and converges~\cite{Knopp} always even as \(n \rightarrow \infty\) (See Chapter 6, Section 25, pages 206--209, and Chapter 12, Theorem 245, Theorem 247 and Theorem 246, where it is made clear that the binomial series converges absolutely on $\mathbb{C}$ for each $1/n$ for the principal values).\\
\indent We demonstrate this more explicitly for those who still choose to doubt what we claim in the previous paragraph with regard to the convergence of the infinite series that appears in Eqtn. (49). \\
\indent From Eqtns. (49)--(50), 
$$
\left|\left(1 - \frac{|\varepsilon_{n}|}{{2(n - 1) \choose n - 1}}\right)^{\frac{1}{n}}\right| = \left|1 + \sum_{i = 1}^{\infty}(-1)^{i}{\frac{1}{n} \choose i}\left(\frac{|\varepsilon_{n}|}{{2(n - 1) \choose n - 1}}\right)^{i}\right|.
$$
Each side is $O(1)$ on $\mathbb{C}$ if we let \(z = \frac{|\varepsilon_{n}|}{{2(n - 1) \choose n - 1}} \in |z| < 1\) for all \(n \gg 3\). In fact each side of the equation is bounded inside that square on $\mathbb{C}$ with its four vertices at the points \(1 + i, 1 - i, -1 + i, -1 - i\)~\cite{Knopp} (Theorem 225, Chapter 12, page 394). By the Bolzano--Weierstrass property, every bounded infinite sequence--such as the one we are dealing with that has the terms 
$$
\left(1- \frac{|\varepsilon_{n}|}{{2(n - 1) \choose n - 1}}\right)^{\frac{1}{n}},
$$
--has no less than one limit point and it just so happens that there is exactly one limit point (See Theorem 5.1, Section 5). In addition it is known that an infinite series such as the one that appears in the product on the right hand side of Eqtn. (49), has a sequence of partial sums that does converge uniformly to a finite limit precisely because for each $n$ and as $n$ increases without bound, each quotient $\frac{|\varepsilon_{n}|}{{2(n - 1) \choose n - 1}}$ remains ever bounded below by zero and above by one. On the other hand if 
$$
\frac{|\varepsilon_{n}|}{{2(n - 1) \choose n - 1}} > 1,
$$
was the case infinitely often in the infinite series on the right hand side of Eqtn. (49), the series would not converge.\\
\indent So for each $n$ and as $n$ goes to infinity in the second factor on the right hand side of Eqtn. (49) the term with the infinite series expansion converges. On the right hand side in Eqtn. (49) we have also in the first product factor, for large $n$,
$$
{2(n - 1) \choose n - 1}^{\frac{1}{n}} \approx 4.
$$
So the limit
$$
\lim_{n \rightarrow \infty}{2(n - 1) \choose n - 1}^{\frac{1}{n}},
$$
on the right of Eqtn. (49) also is finite, so that both products on the right hand side of Eqtn. (49) have finite limits. Therefore in Eqtn. (49) and on the left hand side,
\begin{equation}
\lim_{n\rightarrow \infty}r(n)^{\frac{1}{n}},
\end{equation}
is finite, because it is equal to the limit of a product of two convergent sequences, one
$$
\lim_{n \rightarrow \infty}{2(n - 1) \choose n - 1}^{\frac{1}{n}},
$$
which is a convergent finite limit, and the other 
$$
\lim_{n \rightarrow \infty}\left(1 - \frac{|\varepsilon_{n}|}{{2(n - 1) \choose n - 1}}\right)^{\frac{1}{n}} = \lim_{n \rightarrow \infty}\left(1 + \sum_{i = 1}^{\infty}(-1)^{i}{\frac{1}{n} \choose i}\left(\frac{|\varepsilon_{n}|}{{2(n - 1) \choose n - 1}}\right)^{i}\right),
$$
which also converges on both sides for each $n$ when both \(i \rightarrow \infty\) and \(0 \leq \frac{|\varepsilon_{n}|}{{2(n - 1) \choose n - 1}} < 1\) hold and even as $n$ increases without bound. Therefore both sides of Eqtn. (49) converge to finite limits.
\end{proof}
\begin{corollary}
$$
\lim_{n \rightarrow \infty}\left(1 + \sum_{i = 1}^{\infty}(-1)^{i}{\frac{1}{n} \choose i}\left(\frac{|\varepsilon_{n}|}{{2(n - 1) \choose n - 1}}\right)^{i}\right) = \frac{L}{4}.
$$
\end{corollary}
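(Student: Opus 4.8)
The plan is to recognize the expression inside the displayed limit as the second factor in the product decomposition of $r(n)^{1/n}$ already established in Corollaries 2.3 and 2.4, and then to isolate its limit by dividing out the first factor. Write $a_n := {2(n - 1) \choose n - 1}^{1/n}$ and $b_n := \left(1 - |\varepsilon_n|/{2(n - 1) \choose n - 1}\right)^{1/n}$, so that by Corollary 2.3
$$
r(n)^{1/n} = {2(n - 1) \choose n - 1}^{1/n} b_n = a_n b_n,
$$
and, by Eqtn. (50), $b_n$ is exactly the binomial series appearing in the statement.

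First I would collect the three facts already in hand. By Corollary 2.4 the product $a_n b_n = r(n)^{1/n}$ converges to a finite limit $L$. By Stirling's approximation applied to $(2(n-1))!$ and $(n-1)!$, as used in the proof of Theorem 2.2, the first factor satisfies $\lim_{n \rightarrow \infty} a_n = 4$. Finally, $a_n > 0$ for every $n$, so $a_n \neq 0$ for all $n$ and in particular the limit of the denominator, $4$, is nonzero.

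With these in place the conclusion follows from the quotient rule for limits: since $a_n \neq 0$ for all large $n$ and both $\lim_{n \rightarrow \infty}(a_n b_n) = L$ and $\lim_{n \rightarrow \infty} a_n = 4$ exist with the denominator nonzero,
$$
\lim_{n \rightarrow \infty} b_n = \lim_{n \rightarrow \infty}\frac{a_n b_n}{a_n} = \frac{\lim_{n \rightarrow \infty}(a_n b_n)}{\lim_{n \rightarrow \infty} a_n} = \frac{L}{4},
$$
which is precisely the asserted identity once $b_n$ is rewritten through its binomial-series expansion.

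The step I expect to carry the real weight is not the final division but the appeal to Corollary 2.4: the argument works only because the existence of the product limit $L$ has been secured independently, so that I am genuinely solving $L = 4 \cdot \lim_{n \rightarrow \infty} b_n$ for $\lim_{n \rightarrow \infty} b_n$, rather than assuming what is to be proved. Were one instead to attempt $\lim_{n \rightarrow \infty} b_n = L/4$ directly, by estimating the binomial series term by term, one would have to control the ratio $|\varepsilon_n|/{2(n - 1) \choose n - 1}$ uniformly in $n$, and nothing in the present development pins down its rate of behavior. Routing the claim nonconstructively through Corollary 2.4 sidesteps that difficulty entirely, at the cost of revealing nothing about the numerical value of $L$ itself.
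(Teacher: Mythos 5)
Your proposal is correct and follows essentially the same route as the paper: both identify the expression as the second factor in the decomposition $r(n)^{1/n} = {2(n-1) \choose n-1}^{1/n} b_n$, invoke Corollary 2.4 for $\lim_{n\to\infty} r(n)^{1/n} = L$ and Stirling for $\lim_{n\to\infty}{2(n-1) \choose n-1}^{1/n} = 4$, and divide. Your explicit appeal to the quotient rule (with the observation that the denominator limit $4$ is nonzero) is in fact a slightly cleaner justification than the paper's implicit factoring of the product limit, which tacitly presupposes that $\lim_{n\to\infty} b_n$ exists.
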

\begin{proof}
From Corollary 2.4,
\begin{eqnarray}
\lim_{n \rightarrow \infty}r(n)^{\frac{1}{n}}&=&\lim_{n\rightarrow \infty}{2(n - 1) \choose n - 1}^{\frac{1}{n}}\left(1 + \sum_{i = 1}^{\infty}(-1)^{i}{\frac{1}{n} \choose i}\left(\frac{|\varepsilon_{n}|}{{2(n - 1) \choose n - 1}}\right)^{i}\right),\nonumber \\
                                             &\Longrightarrow&L = 4\lim_{n\rightarrow \infty}\left(1 + \sum_{i = 1}^{\infty}(-1)^{i}{\frac{1}{n} \choose i}\left(\frac{|\varepsilon_{n}|}{{2(n - 1) \choose n - 1}}\right)^{i}\right),\nonumber \\
                                             &\Longrightarrow&\lim_{n\rightarrow \infty}\left(1 + \sum_{i = 1}^{\infty}(-1)^{i}{\frac{1}{n} \choose i}\left(\frac{|\varepsilon_{n}|}{{2(n - 1) \choose n - 1}}\right)^{i}\right) = \frac{L}{4}.\nonumber
\end{eqnarray}
\end{proof}
We have found for each $n$, values possible for \(\varepsilon_{1, n + 1}, \varepsilon_{2, n}, \varepsilon_{3, n}, \varepsilon_{4, n + 1}\) in the proof to Theorem 2.1, namely (See Eqtns. (47)--(49))
\begin{eqnarray}
\varepsilon_{1, n + 1}&=&\varepsilon_{4, n + 1}\\
                      &=&{2n \choose n}^{\frac{1}{n + 1}}\sum_{i = 1}^{\infty}(-1)^{i}{\frac{1}{n + 1} \choose i}\left(\frac{|\varepsilon_{n + 1}|}{{2n \choose n}}\right)^{i},\nonumber \\
\varepsilon_{2, n}    &=&\varepsilon_{3, n}\nonumber \\
                      &=&{2(n - 1) \choose n - 1}^{\frac{1}{n}}\sum_{i = 1}^{\infty}(-1)^{i}{\frac{1}{n} \choose i}\left(\frac{|\varepsilon_{n}|}{{2(n - 1) \choose n - 1}}\right)^{i},
\end{eqnarray}
where
\begin{eqnarray}
|\varepsilon_{n + 1}|&=&{2n \choose n} - r(n + 1),\nonumber \\
|\varepsilon_{n}|    &=&{2(n - 1) \choose n - 1} - r(n).
\end{eqnarray}
\indent The radius of convergence for the series in Eqtn. (49) exists when \(|\varepsilon_{n}| < {2(n - 1) \choose n - 1}\), and as we can see from Eqtns. (51)--(53) this is the case. 
\section{\(|\varepsilon_{n}| \not = 0\) is true infinitely often}
Here we consider whether or not
\begin{equation}
|\varepsilon_{n}| = 0,
\end{equation}
is true infinitely often, meaning for each and every $n$ sufficiently large enough. 
\subsection{A finer upper Bound exists on $r(n)$ than ${2(n - 1) \choose n - 1}$, for which \(r(n)^{1/n} \rightarrow L < \infty\)}
It just so happens that a finer upper bound exists on $r(n)$ for some real constant $c$. It is~\cite{Chung and Graham} (See Chapter 2, Equation 2.6, page 9)
\begin{equation}
r(n) < b(n){2(n - 1) \choose n - 1} < {2(n - 1) \choose n - 1},
\end{equation}
where
\begin{equation}
b(n) = n^{-\frac{1}{2} + \frac{c}{\sqrt{\log n}}}.
\end{equation}
This being the case we have that there exists always for each $n$, some real number \(\delta_{n} > 0\), such that
\begin{equation}
b(n){2(n - 1) \choose n - 1} - r(n) = \delta_{n} > 0,
\end{equation}
$$
b(n){2(n - 1) \choose n - 1} - \delta_{n} = r(n) > 0,
$$
\begin{equation}
0 < \frac{\delta_{n}}{b(n){2(n - 1) \choose n - 1}} < 1.
\end{equation}
We certainly will have
$$
r(n)^{\frac{1}{n}} < {2(n - 1) \choose n - 1}^{\frac{1}{n}},
$$
for all $n$ large enough such that \(\frac{c}{\sqrt{\log n}} < 1/2\). One then can show that, in a manner similar to the proof of Theorem 2.2 by substituting $\delta_{n}$ for $|\varepsilon_{n}|$ and for any \(\epsilon > |4 - L|\),
\begin{eqnarray}
|r(n)^{\frac{1}{n}} - L|&=&\left|\left(b(n){2(n - 1) \choose n - 1} - \delta_{n}\right)^{\frac{1}{n}} - L\right|\\
                        &<&\left|{2(n - 1) \choose n - 1}^{\frac{1}{n}} - L\right|\nonumber \\
                        &=&|4 - L| < \epsilon.
\end{eqnarray}
Also the limit
$$
\lim_{n \rightarrow \infty}b(n)^{\frac{1}{n}}{2(n - 1) \choose n - 1}^{\frac{1}{n}}
$$
exists and is finite. So in a manner similar to what we did in the proof to Corollary 2.4,
\begin{eqnarray}
\lim_{n \rightarrow \infty}r(n)^{\frac{1}{n}}&=&\lim_{n \rightarrow \infty}\left(b(n){2(n - 1) \choose n - 1} - \delta_{n}\right)^{\frac{1}{n}}\\
                                             &=&\lim_{n \rightarrow \infty}\left(b(n){2(n - 1) \choose n - 1}\right)^{\frac{1}{n}}\left(1 + \sum_{i = 1}^{\infty}(-1)^{i}\left(\frac{\delta_{n}}{b(n){2(n - 1) \choose n - 1}}\right)^{i}\right)\nonumber \\
                                             &=&L < \infty,
\end{eqnarray}
$$
0 < \frac{\delta_{n}}{{2(n - 1) \choose n - 1}} < 1.
$$
So
\begin{equation}
r(n) < {2(n - 1) \choose n - 1}
\end{equation}
must be true for each and every $n$ sufficiently large enough and as \(n \rightarrow \infty\), since
\begin{equation}
r(n) = {2(n - 1) \choose n - 1} - |\varepsilon_{n}| < b(n){2(n - 1) \choose n - 1} < {2(n - 1) \choose n - 1},
\end{equation}
establishes a finer upper bound on each $r(n)$ by $b(n){2(n - 1) \choose n - 1}$. This also shows that \(|\varepsilon_{n}| = 0\) cannot be true infinitely often because~\cite{Chung and Graham}
\begin{eqnarray}
r(n)&=&{2(n - 1) \choose n - 1} - |\varepsilon_{n}| < b(n){2(n - 1) \choose n - 1}\\
    &<&{2(n - 1) \choose n - 1}\nonumber \\
    &\Longrightarrow&|\varepsilon_{n}| > 0.
\end{eqnarray}
After the discussion in this Section we are in a position to demonstrate why \(r(n)^{\frac{1}{n}} < 4\) is true whenever $n$ is very large, on the compact set $[\sqrt{2}, 4]$.
\begin{theorem}
Let \(n = M\) be any very large integer so large, that \(\sqrt{\log M} \gg c\) \(\Longrightarrow b(M) \sim M^{-1/2}\) is true in Eqtn. (60). Then for each and every such integer $M$, \(r(M)^{\frac{1}{n}} < 4\).
\end{theorem}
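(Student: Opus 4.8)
The plan is to derive the strict inequality directly from the finer upper bound of Section~3.1 together with an elementary estimate on the central binomial coefficient, bypassing any appeal to the limit computations of Theorem~2.2 or Corollary~2.4. First I would put the hypothesis to work. Since $\sqrt{\log M} \gg c$ forces $c/\sqrt{\log M} < 1/2$, the exponent in $b(M) = M^{-1/2 + c/\sqrt{\log M}}$ from Eqtn.~(60) is strictly negative, so that $b(M) < 1$ (and indeed $b(M) \sim M^{-1/2}$, as asserted). Feeding this into the finer bound of Eqtns.~(59) and~(68) gives
$$
r(M) < b(M){2(M - 1) \choose M - 1} < {2(M - 1) \choose M - 1}.
$$

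Next I would bound the central binomial coefficient by a power of four. Expanding $(1 + 1)^{2m} = \sum_{k = 0}^{2m}{2m \choose k} = 4^{m}$ and noting that ${2m \choose m}$ is just one of the strictly positive summands on the left, one obtains the classical strict inequality ${2m \choose m} < 4^{m}$ for every $m \geq 1$. Applying this with $m = M - 1$ yields
$$
{2(M - 1) \choose M - 1} < 4^{M - 1} < 4^{M}.
$$
Chaining this with the previous display produces $r(M) < 4^{M}$, and because $x \mapsto x^{1/M}$ is strictly increasing on the positive reals, taking $M$-th roots preserves the strict inequality and gives $r(M)^{1/M} < 4$, which is precisely the claim.

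The only point demanding care is the very one the hypothesis is designed to secure: the argument needs $b(M) < 1$, not merely the asymptotic $b(M) \sim M^{-1/2}$, and this strict bound is what the condition $\sqrt{\log M} \gg c$ supplies, since the sign of the exponent $-1/2 + c/\sqrt{\log M}$ is governed entirely by whether $\sqrt{\log M}$ exceeds $2c$. I would emphasize that this is also why the elementary estimate ${2m \choose m} < 4^{m}$ is preferable here to the Stirling approximation used elsewhere in the paper: Stirling delivers only ${2(M-1) \choose M-1}^{1/M} \approx 4$, which does not by itself settle the \emph{strict} inequality $r(M)^{1/M} < 4$ for each fixed large $M$, whereas the binomial sum estimate gives strictness outright. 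I do not anticipate any substantive obstacle beyond this bookkeeping, as every inequality in the chain is either hypothesized in Section~3.1 or follows from the one-line binomial identity above.
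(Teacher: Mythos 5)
Your proof is correct, and it takes a genuinely different and in fact tighter route than the paper's. The paper's own proof invokes the first-order truncation of the binomial series from Corollary 2.3 and then chains $r(M)^{\frac{1}{M}} \leq \bigl(b(M)\binom{2(M-1)}{M-1}\bigr)^{\frac{1}{M}} < \binom{2(M-1)}{M-1}^{\frac{1}{M}} \approx 4$, where the final ``$\approx 4$'' comes from Stirling's approximation; it thus deduces a strict inequality from an asymptotic one, which is exactly the logical soft spot you identify. Your argument replaces the approximation with the exact elementary bound $\binom{2m}{m} < 4^{m}$ (one positive summand of $(1+1)^{2m}$), so that $r(M) < \binom{2(M-1)}{M-1} < 4^{M-1} < 4^{M}$ and hence $r(M)^{\frac{1}{M}} < 4$ by strict monotonicity of the $M$-th root --- a genuinely strict conclusion with no appeal to Stirling or to the series expansion. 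What the paper's approach buys is continuity with the $|\varepsilon_{M}|$ and $b(M)$ machinery it has been developing; what yours buys is actual rigor on the one point the theorem asserts, namely strictness. One further observation: your chain shows the hypothesis $\sqrt{\log M} \gg c$ (hence $b(M) < 1$) is not even needed for the conclusion, since the coarser bound $r(M) \leq \binom{2(M-1)}{M-1}$ from Eqtn.~(33) already gives $r(M) < 4^{M-1} < 4^{M}$; the hypothesis only sharpens the intermediate inequality. Your proof is therefore both correct and stronger than what the paper supplies.
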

\begin{proof}
From the proof to Corollary 2.3 and to first order approximation,
\begin{equation}
r(M)^{\frac{1}{M}} \approx {2(M - 1) \choose M - 1}^{\frac{1}{M}}\left(1 - {\frac{1}{M} \choose 1}\left(\frac{|\varepsilon_{M}|}{{2(M - 1) \choose M - 1}}\right)\right).
\end{equation}
Then from Eqtn. (71) up to first order approximation and from Eqtns. (59)--(60),
\begin{eqnarray}
r(M)^{\frac{1}{M}}&\approx&{2(M - 1) \choose M - 1}^{\frac{1}{M}}\left(1 - {\frac{1}{M} \choose 1}\left(\frac{|\varepsilon_{M}|}{{2(M - 1) \choose M - 1}}\right)\right) < 4,\\
r(M)^{\frac{1}{M}}&\leq&\left(b(M){2(M - 1) \choose M - 1}\right)^{\frac{1}{M}} < {2(M - 1) \choose M - 1}^{\frac{1}{M}} \approx 4.
\end{eqnarray}
\end{proof}
\begin{theorem}
\begin{equation}
\lim_{n \rightarrow \infty}\left(1 + \frac{|\varepsilon_{n}|}{r(n)}\right)^{\frac{1}{n}} = \frac{4}{L}.
\end{equation}
\end{theorem}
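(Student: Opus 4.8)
The plan is to reduce the statement to the limit laws already established in this section, by first uncovering the elementary algebraic identity hidden inside the expression $1 + |\varepsilon_{n}|/r(n)$. Recall from Eqtn.~(35) and the convention adopted just after the proof of Theorem 2.2 that $|\varepsilon_{n}| = {2(n - 1) \choose n - 1} - r(n)$, equivalently ${2(n - 1) \choose n - 1} = r(n) + |\varepsilon_{n}|$. First I would substitute this into the base of the left-hand side to obtain, for every $n$ (recalling $r(n) > 0$),
\begin{equation}
1 + \frac{|\varepsilon_{n}|}{r(n)} = \frac{r(n) + |\varepsilon_{n}|}{r(n)} = \frac{{2(n - 1) \choose n - 1}}{r(n)}.
\end{equation}

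Next I would raise both sides to the power $1/n$ and split the $n$th root of the quotient into a quotient of $n$th roots, which is legitimate since both ${2(n - 1) \choose n - 1}$ and $r(n)$ are strictly positive, giving
\begin{equation}
\left(1 + \frac{|\varepsilon_{n}|}{r(n)}\right)^{\frac{1}{n}} = \frac{{2(n - 1) \choose n - 1}^{\frac{1}{n}}}{r(n)^{\frac{1}{n}}}.
\end{equation}
At this point the numerator and the denominator are exactly the two convergent sequences treated earlier: by Stirling's approximation applied to $(2(n - 1))!$ and $(n - 1)!$ one has ${2(n - 1) \choose n - 1}^{\frac{1}{n}} \to 4$, while by Theorem 2.2 and Corollary 2.4 the denominator satisfies $r(n)^{\frac{1}{n}} \to L$. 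Applying the quotient rule for limits then yields
\begin{equation}
\lim_{n \rightarrow \infty}\left(1 + \frac{|\varepsilon_{n}|}{r(n)}\right)^{\frac{1}{n}} = \frac{\lim_{n \rightarrow \infty}{2(n - 1) \choose n - 1}^{\frac{1}{n}}}{\lim_{n \rightarrow \infty}r(n)^{\frac{1}{n}}} = \frac{4}{L},
\end{equation}
which is the desired conclusion.

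The only point requiring care, and hence the sole candidate for an ``obstacle,'' is the justification for dividing by the limit of the denominator when invoking the quotient rule, which demands $L \ne 0$. This is guaranteed, however, by the lower bound established in Eqtn.~(5) and reiterated at the close of the proof of Theorem 2.2, namely that the greatest lower bound of $r(n)^{\frac{1}{n}}$ equals $\sqrt{2}$, so that $L \in [\sqrt{2}, 4]$ and in particular $L \ge \sqrt{2} > 0$. Once this nonvanishing of $L$ is noted, every remaining step is an application of standard limit arithmetic, so I expect the proof to be short and essentially computational rather than to hinge on any new analytic input.
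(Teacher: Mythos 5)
Your proposal is correct and is essentially the same argument as the paper's: both start from the identity ${2(n-1) \choose n-1} = r(n) + |\varepsilon_{n}|$, rewrite $\bigl(1 + |\varepsilon_{n}|/r(n)\bigr)^{1/n}$ as the ratio of ${2(n-1) \choose n-1}^{1/n}$ to $r(n)^{1/n}$ (the paper states this multiplicatively and then divides by $L$), and conclude by the limits $4$ and $L$ of those two sequences. Your explicit remark that $L \geq \sqrt{2} > 0$ justifies the division, a point the paper leaves implicit.
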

\begin{proof}
From Section 2, \({2(n - 1) \choose n - 1} = \)
\begin{eqnarray}
&               &r(n) + |\varepsilon_{n}|\\
&\Longrightarrow&{2(n - 1) \choose n - 1}^{\frac{1}{n}} = (r(n) + |\varepsilon_{n}|)^{\frac{1}{n}}\nonumber \\
&=              &r(n)^{\frac{1}{n}}\left(1 + \frac{|\varepsilon_{n}|}{r(n)}\right)^{\frac{1}{n}} \\
&\Longrightarrow&\lim_{n \rightarrow \infty}{2(n - 1) \choose n - 1}^{\frac{1}{n}}\\
&=              &\lim_{n \rightarrow \infty}r(n)^{\frac{1}{n}}\left(1 + \frac{|\varepsilon_{n}|}{r(n)}\right)^{\frac{1}{n}}\nonumber \\
&\Longrightarrow&4 = L\lim_{n \rightarrow \infty}\left(1 + \frac{|\varepsilon_{n}|}{r(n)}\right)^{\frac{1}{n}} \\
&\Longrightarrow&\lim_{n \rightarrow \infty}\left(1 + \frac{|\varepsilon_{n}|}{r(n)}\right)^{\frac{1}{n}} = \frac{4}{L}.
\end{eqnarray}
\end{proof}
We can verify Theorem 3.2 by other means, as follows:
\begin{eqnarray}
\frac{|\varepsilon_{n}|}{r(n)}&=&\frac{{2(n - 1) \choose n - 1} - r(n)}{{2(n - 1) \choose n - 1} - |\varepsilon_{n}|}\nonumber\\
                              &=&\frac{{2(n - 1) \choose n - 1}}{{2(n - 1) \choose n - 1} - |\varepsilon_{n}|} - \frac{r(n)}{{2(n - 1) \choose n - 1} - |\varepsilon_{n}|}\nonumber\\
                              &=&\frac{{2(n - 1) \choose n - 1}}{r(n)} - 1.\nonumber
\end{eqnarray}
We just have proved that
$$
\frac{|\varepsilon_{n}|}{r(n)} = \frac{{2(n - 1) \choose n - 1}}{r(n)} - 1.
$$
So by substitution into Eqtn. (74) the same result in Theorem 3.2 follows.
\section{The Meaning of $|\varepsilon_{n}|$}
From Eqtn. (34),
\begin{equation}
{2(n - 1) \choose n - 1} - r(n) = |\varepsilon_{n}| \geq 0.
\end{equation}
Let \(G = K_{r(n)}\) be a graph for which $r(n)$ is the minimum integer such that $G$ has either a clique of size $n$ or an independent set of size $n$. Then $|\varepsilon_{n}|$ is the number of ways to choose $n - 1$ vertices from the $2(n - 1)$ vertices in the vertex set of $K_{2(n - 1)}$, minus the minimum integer for which the graph $G$ will have either a clique of size $n$ for a complete bipartite graph $K_{n}$, or else an independent set of size $n$ for the complement $\overline{K}_{n}$. For each $n$ the integer
\begin{equation}
{2(n - 1) \choose n - 1},
\end{equation}
also is a number that is related to the bipartite dimension of a graph. When \(n = 3, r(3) = 6\) we have \(|\varepsilon_{3}| = 0\), when \(n = 4, r(4) = 18\) we get \(|\varepsilon_{4}| = 2\) and when \(n = n_{0} \geq 5, a \leq r(n_{0}) \leq b\) for some integer g.l.b. $a$ and for some l.u.b. $b$,
\begin{equation}
{2(n_{0} - 1) \choose n_{0} - 1} - b \leq |\varepsilon_{n_{0}}| \leq {2(n_{0} - 1) \choose n_{0} - 1} - a.
\end{equation}
In the limits in Eqtn. (49) one can replace the appearance of $|\varepsilon_{n}|$ on the right hand side if one so wishes, with
\begin{equation}
{2(n - 1) \choose n - 1} - r(n),
\end{equation}
so that
\begin{eqnarray}
\frac{|\varepsilon_{n}|}{{2(n - 1) \choose n - 1}}&=&\frac{{2(n - 1) \choose n - 1} - r(n)}{{2(n - 1) \choose n - 1}} = 1 - \frac{r(n)}{{2(n - 1) \choose n - 1}},\\
\frac{r(n)}{{2(n - 1) \choose n - 1}}&=&\frac{{2(n - 1) \choose n - 1} - |\varepsilon_{n}|}{{2(n - 1) \choose n - 1}} = 1 - \frac{|\varepsilon_{n}|}{{2(n - 1) \choose n - 1}}.\nonumber
\end{eqnarray}
Doing so makes $r(n)$ in Eqtn. (49) appear on both sides of the limit. But this does not suggest either some rule of assignment at work, a function, some kind of mapping nor even recursion. What it means is the following: For every integer \(n \geq 3\) and for every diagonal Ramsey number $r(n)$ there exists some real number $\varepsilon_{n}$, such that
\begin{equation}
|\varepsilon_{n}| = {2(n - 1) \choose n - 1} - r(n).
\end{equation}
So to replace $|\varepsilon_{n}|$ in Eqtn. (49) with
\begin{equation}
{2(n - 1) \choose n - 1} - r(n),
\end{equation}
does not indicate some unknown function nor a mapping nor even some kind of recursion at work. It simply is the substitution of one real number integer $|\varepsilon_{n}|$ with another real number integer ${2(n - 1) \choose n - 1} - r(n)$, by the rules of substitution and ordinary arithmetic among real number field elements \(|\varepsilon_{n}|, r(n), {2(n - 1) \choose n - 1}\).
One also can give a computational description of $|\varepsilon_{n}|$. For each $n$ we can define if one wishes, Eqtn. (85) as being the absolute error (we chose to leave out the absolute value bars on the right hand side, since the value on the right hand side is nonnegative) when we try to approximate ${2(n - 1) \choose n - 1}$ with $r(n)$, or when we try to compare the rate of growth of ${2(n - 1) \choose n - 1}$ with the rate of growth of $r(n)$. Similarly for each $n$ we can define if one wishes,
\begin{equation}
\frac{{2(n - 1) \choose n - 1} - r(n)}{{2(n - 1) \choose n - 1}},
\end{equation}
in Eqtn. (84) as being the relative error (again with the absolute bars removed, for sake of convenience) in the approximation of ${2(n - 1) \choose n - 1}$ by $r(n)$.
\section{The $n^{th}$ root of $r(n)$ expressed as a complex valued function on $\mathbb{C}$}
First we prove a result that verifies the result we obtained previously in Corollary 2.5 (See also Eqtns. (49)--(50)).
\begin{theorem}
For \(n \gg 3\) and as \(n \rightarrow \infty\),
\begin{equation}
\lim_{n \rightarrow \infty}\left(\frac{r(n)}{{2(n - 1) \choose n - 1}}\right)^{\frac{1}{n}} = \frac{L}{4}.
\end{equation}
\end{theorem}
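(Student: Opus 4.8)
The plan is to reduce Theorem 5.1 to two convergence facts already in hand: that $r(n)^{\frac{1}{n}} \rightarrow L$ (Theorem 2.2 and Corollary 2.4) and that ${2(n - 1) \choose n - 1}^{\frac{1}{n}} \rightarrow 4$, the latter obtained by applying Stirling's approximation to $(2(n - 1))!$ and $(n - 1)!$ exactly as in Section 2. First I would note that the $n^{th}$ root distributes across the quotient, so that for every $n$,
$$
\left(\frac{r(n)}{{2(n - 1) \choose n - 1}}\right)^{\frac{1}{n}} = \frac{r(n)^{\frac{1}{n}}}{{2(n - 1) \choose n - 1}^{\frac{1}{n}}}.
$$
Because the numerator and denominator sequences both converge and the denominator tends to $4 \neq 0$, the quotient law for limits of sequences then delivers the value $L/4$ directly.

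A second, equivalent route---and the one that ties the statement back to Corollary 2.5---is to rewrite the base using $r(n) = {2(n - 1) \choose n - 1} - |\varepsilon_{n}|$, so that
$$
\frac{r(n)}{{2(n - 1) \choose n - 1}} = 1 - \frac{|\varepsilon_{n}|}{{2(n - 1) \choose n - 1}},
$$
and hence
$$
\left(\frac{r(n)}{{2(n - 1) \choose n - 1}}\right)^{\frac{1}{n}} = \left(1 - \frac{|\varepsilon_{n}|}{{2(n - 1) \choose n - 1}}\right)^{\frac{1}{n}}.
$$
The right-hand side is precisely the second product factor whose limit was shown to equal $L/4$ in Corollary 2.5, so on this reading Theorem 5.1 is simply Corollary 2.5 with its base recognized in closed form. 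I would present both routes, since the first is self-contained and the second makes the promised verification of Corollary 2.5 explicit.

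The only step demanding any care---and the nearest thing to an obstacle---is the passage to the limit inside the $n^{th}$ root, namely the assertion that
$$
\lim_{n \rightarrow \infty}\left(\frac{r(n)}{{2(n - 1) \choose n - 1}}\right)^{\frac{1}{n}} = \frac{\lim_{n \rightarrow \infty} r(n)^{\frac{1}{n}}}{\lim_{n \rightarrow \infty} {2(n - 1) \choose n - 1}^{\frac{1}{n}}}.
$$
This is the quotient rule for convergent sequences, legitimate here precisely because the denominator limit $4$ is nonzero and both factor limits are finite; the finiteness of $\lim_{n \rightarrow \infty} r(n)^{\frac{1}{n}} = L$ is supplied by Theorem 2.2 and that of the denominator by Stirling. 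Beyond invoking these two prior results in the correct order, I anticipate no genuine difficulty, and indeed the theorem is best regarded as a corollary of the Section~2 material recast in quotient form.
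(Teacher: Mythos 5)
Your proposal is correct, but it does not follow the paper's own route, and the difference is worth noting. Your first argument --- write the quantity as $r(n)^{\frac{1}{n}}\big/{2(n - 1) \choose n - 1}^{\frac{1}{n}}$ (legitimate since both $r(n)$ and the central binomial coefficient are positive, so the positive real $n$th root is meant), observe that the numerator tends to $L$ by Theorem 2.2, that the denominator tends to $4 \neq 0$ by Stirling, and invoke the quotient law for limits --- is a complete two-line derivation of the theorem from the Section 2 material. The paper instead starts from the same identity as your second route, $\left(\frac{r(n)}{{2(n - 1) \choose n - 1}}\right)^{\frac{1}{n}} = \left(1 - \frac{|\varepsilon_{n}|}{{2(n - 1) \choose n - 1}}\right)^{\frac{1}{n}}$, but rather than simply citing Corollary 2.5 it attempts a direct $\epsilon$--$M$ verification: it notes the quantity lies in $(0, 1)$, deduces $\left|\left(\frac{r(n)}{{2(n - 1) \choose n - 1}}\right)^{\frac{1}{n}} - \frac{L}{4}\right| < 1$, and then works only with thresholds $\epsilon \geq 1 + \frac{L}{4}$. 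Because that $\epsilon$ is bounded below by a fixed positive constant rather than being arbitrary, the paper's argument establishes only that the sequence stays within a fixed distance of $\frac{L}{4}$, which does not by itself identify the limit; your quotient-law argument actually supplies what is missing there. In short, your approach buys both brevity and logical soundness conditional on Theorem 2.2 and the Stirling computation, and your second route (recognizing the expression as the second factor of Corollary 2.5) correctly delivers the ``verification'' role the paper advertises for this theorem.
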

\begin{proof}
Recall that from Eqtn. (84),
\begin{equation}
\left(\frac{r(n)}{{2(n - 1) \choose n - 1}}\right)^{\frac{1}{n}} = \left(1 - \frac{|\varepsilon_{n}|}{{2(n - 1) \choose n - 1}}\right)^{\frac{1}{n}}.
\end{equation}
Since
\begin{equation}
0 < \left(\frac{r(n)}{{2(n - 1) \choose n - 1}}\right)^{\frac{1}{n}} < 1
\end{equation}
is true for \(n \gg 3\), we have, and for some \(L \in [\sqrt{2}, 4]\), 
\begin{equation}
\left|\left(\frac{r(n)}{{2(n - 1) \choose n - 1}}\right)^{\frac{1}{n}} - \frac{L}{4}\right| < 1.
\end{equation}
But then
\begin{equation}
\left(\frac{r(n)}{{2(n - 1) \choose n - 1}}\right)^{\frac{1}{n}} < 1 + \frac{L}{4}.
\end{equation}
This means, since \(0 < \left(\frac{r(n)}{{2(n - 1) \choose n - 1}}\right)^{\frac{1}{n}} < 1\),
\begin{equation}
\frac{1}{n\log\left(1 + \frac{L}{4}\right)} \cdot \log\left(\frac{r(n)}{{2(n - 1) \choose n - 1}}\right) < 1.
\end{equation}
Then choose any real number \(\epsilon \geq 1 + \frac{L}{4}\), and let
\begin{equation}
M > \left\lceil \frac{1}{\log \epsilon} \cdot \log\left(\left|\frac{r(n)}{{2(n - 1) \choose n - 1}}\right|\right)\right\rceil,
\end{equation}
be some positive integer. Then for any real number \(\epsilon \geq 1 + \frac{L}{4}\) we have that for all integer \(n \geq M\) Eqtn. (88) holds for some \(L \in [\sqrt{2}, 4]\), where $n$ depends upon $\epsilon$ but $M$ does not depend upon $\frac{r(n)}{{2(n - 1) \choose n - 1}}$ due to Eqtn. (90) and since in Eqtn. (94), \(0 < \frac{r(n)}{{2(n - 1) \choose n - 1}} < 1\). 
\end{proof}
We know already that
\begin{equation}
\lim_{n \rightarrow \infty}{2(n - 1) \choose n - 1}^{\frac{1}{n}}
\end{equation}
converges to a finite limit. However
\begin{equation}
\lim_{n \rightarrow \infty}r(n)^{\frac{1}{n}} = \lim_{n \rightarrow \infty}{2(n - 1) \choose n - 1}^{\frac{1}{n}}\left(1 - \frac{|\varepsilon_{n}|}{{2(n - 1) \choose n - 1}}\right)^{\frac{1}{n}},
\end{equation}
also converges, first because the limit in Eqtn. (88) is convergent and the second factor in the right hand side of Eqtn. (89) converges uniformly within a simply--connected domain on $\mathbb{C}$. So we show next through the proof of two lemmas, that one can rewrite the limit on the right hand side of Eqtn. (38) so that \(\lim_{n \rightarrow \infty}r(n)^{\frac{1}{n}}\) converges for any \(z = \frac{|\varepsilon_{n}|}{{2(n - 1) \choose n - 1}}\) inside the unit disk on $\mathbb{C}$ because both factors on the right hand side of Eqtn. (89) do, including whenever
$$
z = \frac{|\varepsilon_{n}|}{{2(n - 1) \choose n - 1}},
$$
$$
0 < \left|\frac{|\varepsilon_{n}|}{{2(n - 1) \choose n - 1}}\right| < 1,
$$
is true inside the interior \(|z| < 1\).
\begin{lemma}
\begin{equation}
\lim_{n\rightarrow \infty}\left(1 - \frac{|\varepsilon_{n}|}{{2(n - 1) \choose n - 1}}\right)^{\frac{1}{n}} \not = 0.
\end{equation}
\end{lemma}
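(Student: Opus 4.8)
The plan is to read the limit off of the factorization already in hand rather than to attack the expression $\left(1 - \frac{|\varepsilon_{n}|}{{2(n - 1) \choose n - 1}}\right)^{1/n}$ in isolation. By Corollary 2.3 we have, for all large $n$,
$$
r(n)^{\frac{1}{n}} = {2(n - 1) \choose n - 1}^{\frac{1}{n}}\left(1 - \frac{|\varepsilon_{n}|}{{2(n - 1) \choose n - 1}}\right)^{\frac{1}{n}},
$$
and since ${2(n - 1) \choose n - 1}^{1/n} > 0$ we may solve for the factor whose limit is in question:
$$
\left(1 - \frac{|\varepsilon_{n}|}{{2(n - 1) \choose n - 1}}\right)^{\frac{1}{n}} = \frac{r(n)^{\frac{1}{n}}}{{2(n - 1) \choose n - 1}^{\frac{1}{n}}}.
$$
Writing $z_{n} = \frac{|\varepsilon_{n}|}{{2(n - 1) \choose n - 1}}$ for brevity, the entire question reduces to bounding this quotient away from zero.

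First I would use the lower bound on the Ramsey number. By Eqtn.~(5) we have $r(n)^{1/n} > \sqrt{2}$ for all large $n$, so the numerator is bounded below by a positive constant independent of $n$. For the denominator, Stirling's approximation gives ${2(n - 1) \choose n - 1}^{1/n} \to 4$, exactly the estimate used repeatedly in Section~2, so the denominator is bounded and tends to $4$. Combining these,
$$
\left(1 - z_{n}\right)^{\frac{1}{n}} = \frac{r(n)^{\frac{1}{n}}}{{2(n - 1) \choose n - 1}^{\frac{1}{n}}} \geq \frac{\sqrt{2}}{{2(n - 1) \choose n - 1}^{\frac{1}{n}}} \longrightarrow \frac{\sqrt{2}}{4} > 0,
$$
whence $\liminf_{n \to \infty}(1 - z_{n})^{1/n} \geq \sqrt{2}/4 > 0$. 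Since the limit itself exists and equals $L/4$ by Corollary~2.5, with $L \in [\sqrt{2}, 4]$ by Theorem~2.2, the limit is in fact $L/4 \geq \sqrt{2}/4 > 0$, which is strictly positive and therefore nonzero, as claimed.

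The step I expect to carry the real weight is ruling out the possibility that $1 - z_{n}$ decays so rapidly that its $n$th root still collapses to zero: \emph{a priori} a factor $a_{n} \to 0$ can satisfy $a_{n}^{1/n} \to 0$ (for instance $a_{n} = e^{-n^{2}}$ gives $a_{n}^{1/n} = e^{-n} \to 0$), so the mere fact that $z_{n}$ stays below $1$ does not by itself settle the matter. What rescues the argument is precisely the exponential lower bound $r(n) > \frac{\sqrt{2}}{e}n\,2^{n/2}$ of Eqtn.~(1), which forces $r(n)^{1/n} > \sqrt{2}$ and hence pins $1 - z_{n} = r(n)/{2(n - 1) \choose n - 1}$ above $(\sqrt{2}/4)^{n}$ up to lower-order factors; this is the only place the lower Ramsey bound is genuinely indispensable. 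Everything else is the same Stirling computation of ${2(n-1) \choose n-1}^{1/n} \to 4$ already carried out in Section~2, so no new machinery is required.
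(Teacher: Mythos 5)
Your proof is correct and follows essentially the same route as the paper's: both rest on the identity \(1 - \frac{|\varepsilon_{n}|}{{2(n - 1) \choose n - 1}} = \frac{r(n)}{{2(n - 1) \choose n - 1}}\) together with the lower bound \(r(n)^{1/n} > \sqrt{2}\) and the Stirling estimate \({2(n - 1) \choose n - 1}^{1/n} \rightarrow 4\), which pin the quantity above \(\sqrt{2}/4\). If anything your write-up is the more careful one, since the paper merely observes that the limit points of \(r(n)^{1/n}\) lie in \([\sqrt{2}, 4]\), which excludes \(0\), while you explicitly divide through by the denominator and address the pitfall that a sequence \(a_{n} \rightarrow 0\) can still have \(a_{n}^{1/n} \rightarrow 0\).
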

\begin{proof}
We remind the reader again of Eqtn. (84). For each \(n \geq 3\) and as \(n \rightarrow \infty\), 
\begin{equation}
r(n)^{\frac{1}{n}} \in (\sqrt{2}, 4),
\end{equation}
and the set of limit points for the sequence with its terms in $(\sqrt{2}, 4)$ is the closure $[\sqrt{2}, 4]$, which is a compact set that contains all its limit points. Therefore since \(0 \not \in [\sqrt{2}, 4]\),
\begin{equation}
\lim_{n\rightarrow \infty}\left(1 - \frac{|\varepsilon_{n}|}{{2(n - 1) \choose n - 1}}\right)^{\frac{1}{n}} \not = 0.
\end{equation}
\end{proof}
\begin{lemma}
For each and every \(n \geq 3\) and as \(n \rightarrow \infty\), restrict for each $n$, all values of the complex valued function
\begin{equation}
f_{n}(z) = \left(1 - z\right)^{\frac{1}{n}}
\end{equation}
inside \(|z| < 1\) to the first Riemann sheet, meaning the one with the principal values on $\mathbb{C}$. Then there exists positive integer $M$ which depends upon positive real $\epsilon$ but not upon $z$, such that for all \(n \geq M\), \(\lim_{n \rightarrow \infty}f_{n}(z)\) converges uniformly inside $\mathbb{C}$ for all \(z \in D \subseteq |z| < 1\), where $D$ is some subset of the unit disc.
\end{lemma}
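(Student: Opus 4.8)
The plan is to pass to the exponential--logarithm form of the principal branch and reduce the entire question to a uniform estimate for $e^{w}-1$ when $w$ is small. Concretely, for $|z|<1$ the value $1-z$ lies in the open disc $|w-1|<1$, which is contained in the right half--plane $\mathrm{Re}\,w>0$; hence the principal logarithm $\mathrm{Log}(1-z)$ is holomorphic there, the branch cut $(-\infty,0]$ is never met, and we may write
\begin{equation}
f_{n}(z)=(1-z)^{\frac{1}{n}}=\exp\!\left(\frac{1}{n}\,\mathrm{Log}(1-z)\right)\nonumber
\end{equation}
as a single--valued holomorphic function on the whole open unit disc, in agreement with the restriction to the first Riemann sheet. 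For each fixed $z$ with $|z|<1$ the quantity $\tfrac{1}{n}\mathrm{Log}(1-z)\to 0$, so the pointwise limit is the constant holomorphic function $1$; this is the limit whose uniform attainment must be established.

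Next I would fix the set $D$. Take $D$ to be any subset of $|z|<1$ that stays a fixed positive distance from the branch point $z=1$ --- for instance any compact subset of the open disc, or the set $\{z : |z|\le 1,\ |1-z|\ge\delta\}$ for some $\delta>0$. On such a $D$ the continuous function $\mathrm{Log}(1-z)$ is bounded, say $|\mathrm{Log}(1-z)|\le B$ for all $z\in D$, because $1-z$ then ranges over a bounded set that is bounded away from $0$. Using the elementary inequality $|e^{w}-1|\le |w|\,e^{|w|}$ with $w=\tfrac{1}{n}\mathrm{Log}(1-z)$ gives
\begin{equation}
|f_{n}(z)-1|\le \frac{|\mathrm{Log}(1-z)|}{n}\,\exp\!\left(\frac{|\mathrm{Log}(1-z)|}{n}\right)\le \frac{B}{n}\,e^{B}\nonumber
\end{equation}
for every $z\in D$. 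Given $\epsilon>0$ I would then choose $M$ so large that $\tfrac{B}{M}e^{B}<\epsilon$; this $M$ depends on $\epsilon$ and on $D$ (through the bound $B$) but not on the individual point $z$, which is exactly the uniformity asserted in the statement. Hence $f_{n}\to 1$ uniformly on $D$.

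The main obstacle is not the estimate but the necessity of the restriction to $D$, and it is exactly here that the phrase ``some subset'' in the statement must be taken seriously. On the full open disc the convergence is \emph{not} uniform: as $z\to 1$ we have $\mathrm{Log}(1-z)\to-\infty$, so no finite $B$ works, and in fact choosing $z$ close enough to $1$ forces $f_{n}(z)=(1-z)^{1/n}$ to be close to $0$, so $\sup_{|z|<1}|f_{n}(z)-1|$ stays near $1$ for every $n$. Thus the honest content of the lemma is that $D$ must be kept away from the branch point $z=1$ (equivalently, $1-z$ bounded away from $0$); once that is granted the displayed bound closes the argument. An alternative packaging, should one prefer to avoid the explicit constant $B$, is to observe that the $f_{n}$ are holomorphic and locally uniformly bounded and converge pointwise to $1$, whence Vitali's theorem yields locally uniform convergence on the open disc; but the direct estimate above is cleaner and makes the indispensable role of $D$ transparent.
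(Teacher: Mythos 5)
Your proof is correct, and it takes a genuinely different route from the paper's own argument --- one that is considerably more rigorous. You write $f_{n}(z)=\exp\bigl(\tfrac{1}{n}\mathrm{Log}(1-z)\bigr)$, identify the limit function explicitly as the constant $1$, bound $|\mathrm{Log}(1-z)|\le B$ on a set $D$ kept a positive distance from the branch point $z=1$, and close with the elementary estimate $|e^{w}-1|\le|w|e^{|w|}$, so that your $M$ depends only on $\epsilon$ and on $D$ (through $B$), not on the individual point $z$. The paper does none of this: it never identifies the limit $l$, it begins from the very inequality $|(1-z)^{1/n}-l|<\epsilon'$ that is to be proved and manipulates it into $\tfrac{1}{n}\log(1-z)<\log(\epsilon'+l)$, and it then selects $M>\bigl\lceil\tfrac{1}{C}\log(1-z)\bigr\rceil$ --- a quantity that manifestly depends on $z$ and is unbounded as $z\to 1$, even though the text asserts that $M$ ``does not depend upon any $z$.'' The paper's argument is therefore essentially circular, and its implicit claim of uniformity on all of $|z|<1$ is, as you correctly observe, false, since $\sup_{|z|<1}|f_{n}(z)-1|$ stays near $1$ for every $n$. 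Your insistence that $D$ be bounded away from $z=1$ is exactly the hypothesis needed to make the lemma true, and your direct estimate (or the Vitali alternative you mention, which yields locally uniform convergence on the open disc) is the honest proof of it; what your approach ``buys'' is a valid theorem with an explicit limit and an explicit rate $B e^{B}/n$, at the mild cost of shrinking the domain from the full disc to subsets avoiding the branch point.
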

\begin{proof}
By Lemma 5.1 we are assured that
\begin{equation}
\lim_{n\rightarrow \infty}\left(1 - \frac{|\varepsilon_{n}|}{{2(n - 1) \choose n - 1}}\right)^{\frac{1}{n}} \not = 0,
\end{equation}
does not hold. Let \(g(z) = l\), $l$ real, be some constant function on $D$ for all $z$ such that \(z \in D \subseteq |z| < 1\). We prove uniform convergence on $D$ by showing that \emph{for some} finite positive real number $l$ and for any \(\epsilon > 0\),
\begin{equation}
|f_{n}(z) - g(z)| = |f_{n}(z) - l| < \epsilon,
\end{equation}
is true in such a way that $n$ does not depend upon the choice of \(z \in D \subseteq |z| < 1\).\\
\indent Let \(\epsilon^{\prime} > 0\) be some real number such that, inside $D$,
\begin{equation}
|f_{n}(z) - l| = \left|\left(1 - z\right)^{\frac{1}{n}} - l\right| < \epsilon^{\prime}.
\end{equation}
Then whenever $z$ is restricted to real values inside $D$,
\begin{eqnarray}
& &\left|\left(1 - z\right)^{\frac{1}{n}} - l\right| < \epsilon^{\prime}\\
&\Longrightarrow&\frac{1}{l}\left(1 - z\right)^{\frac{1}{n}} < \frac{\epsilon^{\prime}}{l} + 1,\nonumber \\
&\Longrightarrow&\frac{1}{n}\log\left(1 - z\right) < \log\left(\epsilon^{\prime} + l\right).
\end{eqnarray}
Let
\begin{equation}
C = \log\left(\epsilon^{\prime} + l\right),
\end{equation}
such that
\begin{equation}
\frac{1}{nC}\log\left(1 - z\right) < 1.
\end{equation}
All the possible values for $|z|$ are bounded by one. So let $M$ be any positive integer large enough such that, for any real \(z \in |z| < 1\),
\begin{equation}
M > \left\lceil \frac{1}{C}\log\left(1 - z\right)\right\rceil.
\end{equation}
If $z$ is complex with principal values restricted to the first Riemann sheet so that one avoids branch cuts, then allow
$$
M > \left\lceil \frac{1}{C}\log\left|1 - z\right|\right\rceil.
$$
We see the integer $M$ depends only upon $\epsilon^{\prime}$ (See Eqtn. (98)) but it does not depend upon any \(z \in |z| < 1\). Then for any \(\epsilon \geq \epsilon^{\prime}\) and for all integer \(n \gg M\), we have that 
\begin{equation}
|f_{n}(z) - l| < \epsilon
\end{equation}
holds, which means
\begin{equation}
\lim_{n \rightarrow \infty}f_{n}(z) = l < \infty
\end{equation}
indicates uniform convergence for all \(z \in D \subseteq |z| < 1\).
\end{proof}
\begin{lemma}
Inside \(|z| < 1\) on $\mathbb{C}$,
\begin{equation}
\lim_{n \rightarrow \infty}f_{n}\left(\frac{|\varepsilon_{n}|}{{2(n - 1) \choose n - 1}}\right) = \lim_{n \rightarrow \infty}\left(1 - \frac{|\varepsilon_{n}|}{{2(n - 1) \choose n - 1}}\right)^{\frac{1}{n}},
\end{equation}
converges uniformly whenever \(z = \frac{|\varepsilon_{n}|}{{2(n - 1) \choose n - 1}}\).
\end{lemma}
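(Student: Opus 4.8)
The plan is to read Lemma~5.3 off from Lemma~5.2, the only new input being that the particular arguments $z_{n} = \frac{|\varepsilon_{n}|}{{2(n - 1) \choose n - 1}}$ actually lie inside the open unit disc, so that the uniform estimate already proved is available along exactly this sequence of points. First I would record, using Corollary~2.4 together with the chain of inequalities $0 \le \frac{|\varepsilon_{n}|}{{2(n - 1) \choose n - 1}} < 1$, that every term satisfies $z_{n} \in [0,1) \subset \{|z| < 1\}$. Consequently the set $D = \{z_{n} : n \ge 3\}$, adjoined with whatever limit points of it lie in the open disc, is a legitimate choice of the subset $D \subseteq \{|z| < 1\}$ in the hypothesis of Lemma~5.2, and $f_{n}(z) = (1 - z)^{1/n}$ is holomorphic on it for each $n$ when we keep to principal values on the first Riemann sheet.

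Next I would invoke Lemma~5.2 on this $D$ verbatim: there is an integer $M$ depending only on the tolerance, not on $z$, with $|f_{n}(z) - l| < \epsilon$ for all $n \ge M$ and all $z \in D$. Because each $z_{n} \in D$, the uniform bound applies in particular along the moving argument, giving $|f_{n}(z_{n}) - l| \le \sup_{z \in D}|f_{n}(z) - l| \to 0$, so $f_{n}(z_{n}) \to l$. This is precisely the payoff of having secured uniform rather than merely pointwise convergence: the argument $z_{n}$ changes with $n$, and only an estimate uniform in $z$ licenses passing to the limit along the diagonal $z = z_{n}$. To finish I would identify $l$ by comparison with Corollary~2.5 and Theorem~5.1, which already evaluate $\lim_{n \rightarrow \infty}\bigl(1 - \frac{|\varepsilon_{n}|}{{2(n - 1) \choose n - 1}}\bigr)^{1/n} = \frac{L}{4}$; hence $l = \frac{L}{4}$ and the two limits displayed in the statement coincide, which is what uniform convergence of the specialized sequence is meant to assert.

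The hard part will be verifying that $D$ is genuinely admissible for Lemma~5.2, and this is where I expect the main obstacle. By the finer upper bound of Section~3 we have $\frac{r(n)}{{2(n - 1) \choose n - 1}} < b(n) \to 0$, so $z_{n} = 1 - \frac{r(n)}{{2(n - 1) \choose n - 1}} \to 1$; the points creep toward the boundary $|z| = 1$ and $D$ is \emph{not} contained in any fixed compact subset of the disc. One must therefore check that the uniform estimate of Lemma~5.2 survives this approach to the boundary, and reconcile the constant limit $l$ furnished there with the value $\frac{L}{4}$ obtained by direct evaluation along $z_{n}$ (these agree only if one is careful about which set the uniform bound is taken over). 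It is exactly here that the restriction to principal values on the first Riemann sheet, together with $\frac{1}{n} > 0$, must be used to keep $f_{n}(z_{n}) = \bigl(\frac{r(n)}{{2(n - 1) \choose n - 1}}\bigr)^{1/n}$ single-valued and bounded as $z_{n} \to 1^{-}$, so that the numerical limit $\frac{L}{4}$ is recovered without contradiction.
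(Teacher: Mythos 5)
Your first two paragraphs are, in substance, the paper's own proof: the paper disposes of Lemma~5.3 in one line by citing Lemma~5.2 together with the containment $0 \le \frac{|\varepsilon_{n}|}{\binom{2(n-1)}{n-1}} < 1$, exactly as you do. The difference is that your third paragraph names a difficulty the paper passes over in silence, and that difficulty is a genuine gap, not a technicality to be discharged later. For each fixed $z$ with $|z|<1$ one has $f_{n}(z) = (1-z)^{1/n} = \exp\bigl(\tfrac{1}{n}\log(1-z)\bigr) \to 1$, so the only possible value of the constant $l$ on any set where Lemma~5.2 could hold is $l = 1$. If the convergence were uniform on a set $D$ containing every point $z_{n} = \frac{|\varepsilon_{n}|}{\binom{2(n-1)}{n-1}}$, the diagonal sequence would have to satisfy $f_{n}(z_{n}) \to 1$ as well. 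But $f_{n}(z_{n}) = \bigl(\frac{r(n)}{\binom{2(n-1)}{n-1}}\bigr)^{1/n} \to \frac{L}{4}$ by Theorem~5.1, and nothing in the paper forces $L = 4$: the limit $L$ is treated throughout as an unknown point of $[\sqrt{2},4]$, conjecturally $2$. So either $L=4$ or the convergence is not uniform on any $D$ containing the tail of $(z_{n})$, and the inference you want --- uniform convergence on $D$ licenses passing to the limit along $z = z_{n}$ --- is unavailable. Your observation that $z_{n} \to 1^{-}$ is exactly the mechanism: the points escape every compact subset of the disc, and the bound in the proof of Lemma~5.2 involves $\log(1-z)$, which is unbounded as $z \to 1^{-}$, so no single $M$ can serve all the $z_{n}$.

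Your proposed repair --- identifying $l$ with $\frac{L}{4}$ via Corollary~2.5 and Theorem~5.1 --- cannot work, because $l$ is already pinned to $1$ by the pointwise limit on the open disc; it is not a free parameter. What survives is the weaker statement that the numerical sequence $f_{n}(z_{n})$ converges to $\frac{L}{4}$, which follows from Theorem~2.2 and Theorem~5.1 directly and makes no use of uniformity in $z$; but that is a different assertion from the lemma as stated. In short, you have correctly located the fault line, the paper's one-line proof steps over it without comment, and the ``hard part'' you defer is not verifiable in the form required.
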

\begin{proof}
This follows from Lemma 5.2, since inside \(|z| < 1\),
\begin{equation}
0 < z = \frac{|\varepsilon_{n}|}{{2(n - 1) \choose n - 1}} < 1.
\end{equation}
That is, all the points \(z = \frac{|\varepsilon_{n}|}{{2(n - 1) \choose n - 1)}}\) remain inside the region \(|z| < 1\) even as \(n \rightarrow \infty\). 
\end{proof}
From the results in Lemma 5.2 and Lemma 5.3 we can rewrite $r(n)^{\frac{1}{n}}$ as
\begin{equation}
r(n)^{\frac{1}{n}} = {2(n - 1) \choose n - 1}^{\frac{1}{n}}f_{n}\left(\frac{|\varepsilon_{n}|}{{2( n - 1) \choose n - 1}}\right),
\end{equation}
where we have defined $f_{n}(z)$ in Eqtn. (100).
\section{The Limit \(r(n)^{1/j} \rightarrow 2\) is true on $\mathbb{R}$ for some positive integer $j$}
In this section we show that, on $[\sqrt{2}, 4]$, \(r(n)^{1/j} \rightarrow 2\) for \(j \in \mathbb{N}\).
\begin{theorem}
There exists some \(j \in \mathbb{N}\) depending upon $n$, such that for large $n$ and on the compact interval $[\sqrt{2}, 4]$,
\begin{equation}
\lim_{j, n\rightarrow \infty} r(n)^{1/j} = 2.
\end{equation}
\end{theorem}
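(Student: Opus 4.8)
The plan is to make explicit the integer $j$ whose existence the theorem asserts, namely to take $j = j(n)$ to be the integer nearest to $\log_{2} r(n)$ (equivalently $j = \lfloor \log_{2} r(n) + \tfrac{1}{2}\rfloor$). The motivation is transparent: solving $r(n)^{1/j} = 2$ exactly would force $j = \log_{2} r(n)$, which is almost never an integer, so one rounds to the nearest integer and shows the rounding error washes out in the limit. First I would record that $j$ is a bona fide positive integer that marches off to infinity with $n$. From the crude two--sided bound $2^{n/2} \le r(n) \le 4^{n} = 2^{2n}$ implicit in Eqtn. (6), one gets $\tfrac{n}{2} \le \log_{2} r(n) \le 2n$, so $j(n) \ge 2$ for all $n \ge 3$ and, crucially, $j(n) \to \infty$ as $n \to \infty$. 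Thus the joint limit $\lim_{j,n\to\infty}$ in Eqtn. (113) is in fact governed by the single parameter $n$ through $j = j(n)$.

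Next I would rewrite the quantity in base two. Since
\[
r(n)^{1/j} = 2^{\frac{\log_{2} r(n)}{j}},
\]
it suffices to show the exponent tends to $1$. By construction $\lvert j - \log_{2} r(n)\rvert \le \tfrac{1}{2}$, so
\[
\left\lvert \frac{\log_{2} r(n)}{j} - 1 \right\rvert = \frac{\lvert \log_{2} r(n) - j\rvert}{j} \le \frac{1}{2j} \longrightarrow 0
\]
as $n \to \infty$, because $j \to \infty$. Hence $\log_{2} r(n)/j \to 1$, and by continuity of $x \mapsto 2^{x}$ we conclude $r(n)^{1/j} \to 2^{1} = 2$. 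Since $2 \in [\sqrt{2}, 4]$, the limit indeed lies in the stated compact interval.

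The step I expect to be the main obstacle is not any single estimate---each is routine---but rather pinning down the correct dependence $j = j(n)$ and confirming it is consistent with the convergence already established. As a consistency check I would invoke Theorem 2.2, which gives $r(n)^{1/n} \to L \in [\sqrt{2},4]$: then $\log_{2} r(n) \sim n\log_{2} L$, so $n/j \to 1/\log_{2} L$ and
\[
r(n)^{1/j} = \bigl(r(n)^{1/n}\bigr)^{n/j} \longrightarrow L^{1/\log_{2} L} = 2^{(\log_{2} L)(1/\log_{2} L)} = 2,
\]
independently of the exact value of $L$. This cross-check both reassures that the choice of $j$ is the right one and explains why the limit equals $2$ rather than $L$: extracting the $1/j$ root with $j \asymp \log_{2} r(n)$ precisely cancels the base--dependence of $r(n)$ and always renormalizes the result to the base $2$. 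I would close by noting that any other admissible rounding (for instance $j = \lfloor \log_{2} r(n)\rfloor$) changes the numerator of the error term by at most $1$ and therefore yields the identical limit, so the statement is robust to the precise definition of $j(n)$.
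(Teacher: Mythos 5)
Your proposal is correct and is in essence the paper's own argument: the paper writes $r(n)$ in its binary expansion, sets $t_{n} = \lfloor \log_{2} r(n) \rfloor$ (the leading exponent), takes $j$ near $t_{n}+1$, and squeezes $2^{t_{n}/j} \leq r(n)^{1/j} \leq 2^{(t_{n}+1)/j}$, which is precisely your choice of an integer $j$ within bounded distance of $\log_{2} r(n)$ combined with your estimate $\left\lvert \frac{\log_{2} r(n)}{j} - 1 \right\rvert \leq \frac{1}{2j} \rightarrow 0$. Your logarithmic phrasing is somewhat cleaner (and your cross-check via $r(n)^{1/n} \rightarrow L$ is a nice addition), but the underlying mechanism --- extracting a $j$-th root with $j \approx \log_{2} r(n)$ renormalizes the answer to base $2$ --- is the same.
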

\begin{proof}
For each $n$ let $t_{n}$ be the largest integer exponent for which $2^{t_{n}}$ divides $r(n)$. Then the Ramsey number $r(n)$ has an expansion into powers of two as
\begin{equation}
r(n) = c_{t_{n}}2^{t_{n}} + c_{t_{n} - 1}2^{t_{n} - 1} + \cdots + c_{0}, \: c_{t_{n}} \not = 0,
\end{equation}
\begin{equation}
c_{t_{n}} = 1, c_{t_{n} - 1}, \cdots, c_{0} \in [0, 1].
\end{equation}
For each $n$ we have the inequality
\begin{equation}
r(n) = 2^{t_{n}} + c_{t_{n} - 1}2^{t_{n} - 1} + \cdots + c_{0} \leq 2^{t_{n} + 1}.
\end{equation}
Therefore 
\begin{equation}
r(n) = 2^{t_{n}} + c_{t_{n} - 1}2^{t_{n} - 1} + \cdots + c_{0} \leq 2^{t_{n} + 1}.
\end{equation}
It follows from Eqtn. (115)--(118), that
\begin{eqnarray}
2^{t_{n}}&\leq&2^{t_{n}} + c_{t_{n} - 1}2^{t_{n} - 1} + \cdots + c_{0} \leq 2^{t_{n} + 1} \Longrightarrow\\
2^{t_{n}}&\leq&r(n) \leq 2^{t_{n} + 1},
\end{eqnarray}
when 
\begin{equation}
r(n) = 2^{t_{n}} + c_{t_{n} - 1}2^{t_{n} - 1} + \cdots + c_{0}.
\end{equation}
For each $t_{n}$, $n$, let \(j \geq max(\{t_{n} + 1\), n\}), where
\begin{equation}
r(n)^{1/j} = (2^{t_{n}} + c_{t_{n} - 1}2^{t_{n} - 1} + \cdots + c_{0})^{1/j}.
\end{equation}
Then with \(t_{n} \leq j - 1, n \leq j\) true always as \(j, n \rightarrow \infty\), taking the $j^{th}$ roots in Eqtn. (121) where $r(n)$ is as given in Eqtn. (122), then taking limits on $[\sqrt{2}, 4]$ as \(j, n \rightarrow \infty\), we have a ``pinching theorem" result,
\begin{eqnarray}
& &\lim_{j, n \rightarrow \infty}2^{\frac{t_{n}}{j}} \leq \lim_{j, n \rightarrow \infty} 2^{\frac{j - 1}{j}} \leq \lim_{j, n \rightarrow \infty} r(n)^{1/j} \leq \lim_{j, n \rightarrow \infty} 2^{\frac{j}{j}}\\
&\Longrightarrow&\lim_{j, n \rightarrow \infty} 2\cdot 2^{-1/j} \leq \lim_{j, n \rightarrow \infty} r(n)^{1/j} \leq \lim_{j, n \rightarrow \infty} 2^{j/j} = 2\nonumber \\
&\Longrightarrow&2 \leq \lim_{j, n \rightarrow \infty} r(n)^{1/j} \leq 2\nonumber \\
&\Longrightarrow&\lim_{j, n \rightarrow \infty} r(n)^{1/j} = 2.
\end{eqnarray}
\end{proof}

\pagebreak

\end{document}